\newcommand*\bigcdot{\mathpalette\bigcdot@{.5}}
\newcommand*\bigcdot@[2]{\mathbin{\vcenter{\hbox{\scalebox{#2}{$\m@th#1\bullet$}}}}}
\newcommand{\osc}{{\rm osc}}
\newtheorem{theorem}{Theorem}[section]
\newtheorem{lemma}[theorem]{Lemma}
\newtheorem{corollary}[theorem]{Corollary}
\declaretheorem[style=remark,qed=$\vartriangle$,sibling=theorem]{remark}
\numberwithin{equation}{section}
\newcommand{\eps}{\varepsilon}
\newcommand{\R}{\mathbb R}
\newcommand{\N}{\mathbb N}
\newcommand{\cN}{\mathcal N}
\newcommand{\cD}{\mathcal D}
\newcommand{\cH}{\mathcal H}
\newcommand{\cP}{\mathcal P}
\newcommand{\cE}{\mathcal E}
\newcommand{\cL}{\mathcal L}
\newcommand{\identity}{\mathrm{Id}}
\DeclareMathOperator{\ran}{ran}
\DeclareMathOperator{\supp}{supp}
\DeclareMathOperator{\diam}{diam}
\DeclareMathOperator*{\argmin}{argmin}
\DeclareMathOperator{\dist}{dist}
\DeclareMathOperator{\Span}{span}
\newcommand{\nrm}{| \! | \! |}
\newcommand{\be}{\begin{equation}}
\newcommand{\ee}{\end{equation}}
\newcommand{\bbT}{\mathbb{T}}
\newcommand{\tria}{{\mathcal T}}
\newcommand{\uumlaut}{{\"u}}
\par\begin{samepage}%
\newcounter{ccondition}
\renewcommand{\theequation}{\temp}%
\newcounter{mylistcounter}
\renewcommand{\themylistcounter}{(\roman{mylistcounter})}
\title[Ultra-weak least squares discretizations ]{Ultra-weak least squares discretizations for Unique Continuation and Cauchy problems}
\date{\today}
\author{Harald Monsuur, Rob Stevenson}
\address{Korteweg-de Vries (KdV) Institute for Mathematics, University of Amsterdam, P.O. Box 94248, 1090 GE Amsterdam, The Netherlands.}
\email{h.monsuur@uva.nl, rob.p.stevenson@gmail.com}
\thanks{This research has been supported by the NSF Grant DMS ID 1720297, and by the Netherlands Organization for Scientific Research (NWO) under contract.~no.~SH-208-11. 
We acknowledge the support of SURF (www.surf.nl) in using the National Supercomputer Snellius.}
\subjclass[2020]{
35B30 
35B35 
35B45, 
35R25, 
65J20, 
65N12 
65N30, 
}
\keywords{Unique continuation, Cauchy problem, conditional stability, ultra-weak variational formulation, regularized least squares, Fortin operators, nonconforming discretizations, medius analysis, adaptively refined partitions.}
\begin{document}

\begin{abstract} In this paper, conditional stability estimates are derived for unique continuation and Cauchy problems associated to the Poisson equation in ultra-weak variational form. Numerical approximations are obtained as minima of regularized least squares functionals. The arising dual norms are replaced by discretized dual norms, which leads to a mixed formulation in terms of trial- and test-spaces.
For stable pairs of such spaces, and a proper choice of the regularization parameter, the $L_2$-error on a subdomain in the obtained numerical approximation can be bounded by the best possible fractional power of the sum of the data error and the error of best approximation. Compared to the use of a standard variational formulation, the latter two errors are measured in weaker norms. To avoid the use of $C^1$-finite element test spaces, nonconforming finite element test spaces can be applied as well. They either lead to the qualitatively same error bound, or in a simplified version, to such an error bound modulo an additional data oscillation term. Numerical results illustrate our theoretical findings.
\end{abstract}
\maketitle

\section{Introduction}
\subsection{UC and Cauchy problems}
In this work we consider the following two problems: The Unique Continuation (UC) problem that for bounded domains $\emptyset \neq \omega \Subset \Omega \subset \R^n$ seeks $u \colon \Omega \rightarrow \R$ such that
$$
\left\{\begin{array}{rl}
-\triangle u = \ell& \text{on }\Omega,\\
u|_{\omega}=q & \text{on }\omega;
\end{array}
\right.
$$
and the Cauchy problem that for some open connected Lipschitz subset $\emptyset \neq\Sigma \subset \partial\Omega$  seeks $u \colon \Omega \rightarrow \R$ such that 
\be \label{eq:41}
\left\{\begin{array}{rl}
-\triangle u = \ell& \text{on }\Omega,\\
u=g& \text{on }\Sigma,\\
\partial_n u=\psi& \text{on }\Sigma.
\end{array}
\right.
\ee
These Poisson problems are known to be ill-posed, but so-called \emph{conditional stability} can be demonstrated. 

To see what the latter means, let us consider the UC problem. The common variational formulation reads as
finding $u \in H^1(\Omega)$ such that
$$
\left\{\begin{array}{r@{}c@{}ll}
\int_\Omega \nabla u \cdot \nabla v\,dx &\,\,\,=\,\,\,& \ell(v)& (v \in H^1_0(\Omega)),\\
u|_{\omega}&\,\,\,=\,\,\,&q & \text{on }\omega.
\end{array}
\right.
$$
For $A \in \cL(H^1(\Omega), H^{-1}(\Omega) \times L_2(\omega))$ defined by $
 Az :=\big(v \mapsto \int_\Omega \nabla z\cdot \nabla v\,dx,z|_{\omega}\big)$, it is known, cf.~e.g.~\cite[Sect.~2.2]{35.9298}, 
 that for a domain $G$ with $\omega \Subset G \Subset \Omega$, there exists an $\alpha \in (0,1)$ such that  \emph{under the condition that $\|z\|_{L_2(\Omega)}\lesssim 1$}, it holds that
$$
 \|z\|_{L_2(G)} \lesssim \|A z\|_{H^{-1}(\Omega) \times L_2(\omega)}^\alpha \quad (\|A z\|_{H^{-1}(\Omega) \times L_2(\omega)} \rightarrow 0).
 $$

For completeness, here and in the following, with $C \lesssim D$ it is meant that $C$ can be bounded by a multiple of $D$, independently of parameters which $C$ and $D$ may depend on. Clearly, $C \gtrsim D$ means $D \lesssim C$, and $C \eqsim D$ means $C \lesssim D$ and  $C \gtrsim D$.

Conditional stability estimates have been the basis for the design of convergent numerical approximation schemes (e.g.~\cite{35.8585,35.925,35.859,35.8595,35.9298} and \cite{58.7}). 
 Above conditional stability estimate is, however, not fully satisfactory. The bound for $z$ is in terms of its $L_2(G)$-norm, but boundedness of the right-hand side requires $z \in H^1(\Omega)$.
 In the derived error estimates for the numerical schemes this is reflected by the fact that these error estimates are obtained for the $L_2(G)$-norm, whereas they require approximation properties of the applied trial space in the $H^1(\Omega)$-norm (see \cite[Sect.~2.2]{35.9298}).

\subsection{Ultra-weak variational formulations} 
In the current work we will therefore consider \emph{ultra-weak} variational formulations of the UC and Cauchy problems. Considering here again the UC problem, another integration-by-parts shows that
$$
\left\{\begin{array}{r@{}c@{}ll}
-\int_\Omega u \triangle v\,dx &\,\,\,=\,\,\,& \ell(v)& (v \in H^2_0(\Omega)),\\
u|_{\omega}&\,\,\,=\,\,\,&q & \text{on }\omega.
\end{array}
\right.
$$
Redefining $A \in \cL(L_2(\Omega), H^{-2}(\Omega) \times L_2(\omega))$ by $
 Az :=\big(v \mapsto \int_\Omega -z \triangle v\,dx,z|_{\omega}\big)$, we will show that \emph{under the condition that} $\|z\|_{L_2(\Omega)} \lesssim 1$, it holds that
$$
 \|z\|_{L_2(G)} \lesssim \|A z\|_{H^{-2}(\Omega) \times L_2(\omega)}^\alpha \quad (\|A z\|_{H^{-2}(\Omega) \times L_2(\omega)} \rightarrow 0).
 $$
So we will obtain the qualitatively same conditional stability estimate, in particular with the same value of $\alpha$, but in terms of this \emph{weaker} $\|A z\|_{H^{-2}(\Omega) \times L_2(\omega)}$-norm. A similarly improved conditional stability estimate will be shown for the Cauchy problem.

\subsection{Regularized least squares approximations}
In \cite{58.7}, a regularized least squares approximation scheme was proposed for general ill-posed, but conditional stable PDE problems. For the ultra-weak variational formulation of the UC problem, a family $(X^\delta)_{\delta \in I}$ of finite dimensional subspaces of $L_2(\Omega)$, and an $\eps \geq 0$, it reads as finding
$$
u_\eps^\delta:=\argmin_{z \in X^\delta} \|A z-(\ell,q)\|_{H^{-2}(\Omega) \times L_2(\omega)}^2+\eps^2\|z\|_{L_2(\Omega)}^2.
$$
Since the $\|\bigcdot\|_{H^{-2}(\Omega)}$-norm cannot be evaluated, it is replaced by the discretized dual norm $\sup_{0 \neq v \in Y^\delta} \frac{|\bigcdot(v)|}{\|v\|_{H^2(\Omega)}}$, where $(Y^\delta)_{\delta \in I}$ is a family of finite dimensional subspaces of $H^2_0(\Omega)$ such that the pair $(X^\delta,Y^\delta)$ is uniformly stable. The latter in the sense that, for arguments $z \in X^\delta$, the discretized dual norm of $v \mapsto \int_\Omega z \triangle v\,dx$ is equivalent to its true $H^{-2}(\Omega)$-norm, uniformly in $\delta \in I$.
The general theory from \cite{58.7} shows that for a suitable choice of $\eps>0$,
\be \label{eq:40}
\|u-u_\eps^\delta\|_{L_2(G)} \lesssim \big(\|Au-(\ell,q)\|_{H^{-2}(\Omega) \times L_2(\omega)}+\min_{z \in X^\delta}\|u-z\|_{L_2(\Omega)}\big)^\alpha
\ee
for the right hand side tending to $0$. 
Compared to such an estimate for the standard variational formulation, here both the \emph{consistency error} component $-\triangle u-\ell$, and the best approximation error are measured in \emph{weaker norms}. The other consistency error component $u|_\omega-q$ is measured in $\|\bigcdot\|_{L_2(\omega)}$ for both formulations.

What remains is the \emph{construction} of uniformly stable pairs of `trial' and `test' spaces. For $(\tria^\delta)_{\delta \in I}$ being a family of conforming, uniformly shape regular triangulations of a polygon $\Omega \subset \R^2$, we show that such a stable pair is given by $X^\delta$ being the space of piecewise constants, and $Y^\delta$ being the \emph{Hsieh–Clough–Tocher} (HCT) macro finite element space, both w.r.t.~$\tria^\delta$.

\subsection{Nonconforming test spaces}
The use of $C^1$-finite element test spaces is the prize to be paid for the ultra-weak variational formulation. Since 
$C^1$-finite element are somewhat complicated to implement, we also study the use of \emph{nonconforming} test spaces.
Under a (generalized) uniform stability condition for the pair $(X^\delta,Y^\delta)$, we show a bound similar to \eqref{eq:40}, where the upper bound contains an additional `data oscillation term'.
For $X^\delta$ being the space of piecewise constants, and $Y^\delta$ the Morley finite element space, both w.r.t.~$\tria^\delta$, we show that this uniform stability condition is satisfied.
Assuming that $\ell \in L_2(\Omega)$, it is shown  that data oscillation is of higher order than the error of best approximation.
Alternatively, general $\ell \in H^{-2}(\Omega)$ can be allowed when the nonconforming test functions are smoothed by a suitable companion operator before being submitted to $\ell$. In this case the qualitatively  same error estimates are obtained as for conforming test functions, so without any oscillation term. \medskip

The results concerning the least squares approximation with conforming or nonconforming test spaces apply analogously to the ultra-weak formulation of the Cauchy problem. For this problem the ultra-weak formulation has the additional advantage that both Neumann and Dirichlet boundary conditions are natural ones, so that their imposition will be nearly effortless.

\subsection{Organization} In Sect.~\ref{sec:2}, conditional stability estimates are derived for ultra-weak variational formulations of the UC and Cauchy problems. Sect.~\ref{sec:3}--\ref{sec:6} are devoted to least squares approximations of the UC problem, where in Sect.~\ref{sec:7} the mostly minor adaptations are discussed for the application to the Cauchy problem. 

In Sect.~\ref{sec:3} the general least squares approach for conditional stable PDEs from \cite{58.7} is applied to the UC problem. The non-computable dual norm in the regularized least squares functional is replaced by a discretized dual norm, which leads to a mixed formulation. 
Under a uniform inf-sup stability condition on pairs of trial- and test-spaces, it leads to the error estimate \eqref{eq:40} where the value of $\alpha$ is shown to be optimal.
It is shown that the data approximation error and the error of best approximation are measured in weaker, and thus more favourable norms than with the standard variational formulation. 
In Sect.~\ref{sec:4}, the aforementioned uniform inf-sup stability condition is verified pairs of piecewise constants and HTC finite element spaces w.r.t.~ to common, uniformly shape regular triangulations.

In Sect.~\ref{sec:5} the analysis from \cite{58.7} is generalized to the use of nonconforming test spaces. The `medius analysis' of the effect of nonconforming spaces from \cite{77.4} for elliptic problem is generalized to mixed formulations. In Sect.~\ref{sec:6} it is shown that pairs of piecewise constants and Morley finite element spaces satisfy the necessary generalized uniform inf-sup stability condition.
This medius analysis requires the construction of a bounded mapping of the nonconforming finite element space into a conforming relative.
Taking for this latter space the HCT space, it is shown that such a `companion operator' exists with the additional property of being a right-inverse of the Fortin operator. As a consequence, when the nonconforming test functions are smoothed by this companion operator before being submitted to the forcing term, the qualitatively same error estimates are obtained as for conforming test functions.

In Sect.~\ref{sec:8} numerical results are presented, and a conclusion is formulated in Sect.~\ref{sec:9}.

\section{Conditional stability} \label{sec:2}
A key ingredient in deriving conditional stability results is the following ``propagation of smallness'' result for harmonic functions \cite[Thm.~5.1]{10.1}, which is a corollary of the ``three balls estimate'', e.g., see \cite[Thm.~2.1]{10.1}.

\begin{theorem}[Propagation of smallness] \label{RSprop:1}
For a bounded domain $\Omega \subset \R^n$, and subdomains $\emptyset \neq \omega \Subset G \Subset \Omega$, there exists an $\alpha \in (0,1)$ such that\footnote{\label{voetje} Obviously, for $G=\omega$ the result holds true for $\alpha=1$.} for all harmonic $w \in L_2(\Omega)$,
\be \label{RSeq:8}
\|w\|_{L_2(G)} \lesssim \|w\|_{L_2(\omega)}^\alpha \|w\|_{L_2(\Omega)}^{1-\alpha}.
\ee
\end{theorem}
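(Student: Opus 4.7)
The plan is to deduce the estimate from the three balls estimate by a classical chain-of-balls argument. Recall that the three balls estimate asserts: for concentric balls $B_{r_1}(x)\subset B_{r_2}(x)\subset B_{r_3}(x)\subset\Omega$ and any harmonic $w\in L_2(\Omega)$,
\[
\|w\|_{L_2(B_{r_2}(x))} \lesssim \|w\|_{L_2(B_{r_1}(x))}^{\beta}\|w\|_{L_2(B_{r_3}(x))}^{1-\beta},
\]
with some $\beta\in(0,1)$ depending only on the ratios $r_1/r_2$ and $r_2/r_3$.

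First I would fix $\rho>0$ so small that $B_{3\rho}(x)\subset\Omega$ for every $x\in\overline{G}$ and that $B_{\rho}(x_0)\subset\omega$ for some chosen $x_0\in\omega$. Then I would cover the compactum $\overline{G}$ by finitely many balls $B_\rho(y_1),\dots,B_\rho(y_N)$ with centres $y_j\in\overline G$. Exploiting that $G$ is open and connected with $x_0\in\omega\subset G$, I would join each $y_j$ to $x_0$ by a polygonal path in $G$ and subdivide it to produce a chain $x_0=x_{j,0},x_{j,1},\dots,x_{j,k_j}=y_j$ in $G$ with $|x_{j,i}-x_{j,i-1}|\le\rho$, so that $B_\rho(x_{j,i})\subset B_{2\rho}(x_{j,i-1})$.

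Next I would apply the three balls estimate with $(r_1,r_2,r_3)=(\rho,2\rho,3\rho)$ centred at each $x_{j,i-1}$. Combining the inclusions $B_\rho(x_{j,i})\subset B_{2\rho}(x_{j,i-1})$ and $B_{3\rho}(x_{j,i-1})\subset\Omega$ yields the one-step propagation
\[
\|w\|_{L_2(B_\rho(x_{j,i}))}\lesssim \|w\|_{L_2(B_\rho(x_{j,i-1}))}^\beta\|w\|_{L_2(\Omega)}^{1-\beta}.
\]
A straightforward induction on $i$, seeded by $\|w\|_{L_2(B_\rho(x_0))}\le \|w\|_{L_2(\omega)}$, propagates this along the chain to
\[
\|w\|_{L_2(B_\rho(y_j))} \lesssim \|w\|_{L_2(\omega)}^{\beta^{k_j}}\|w\|_{L_2(\Omega)}^{1-\beta^{k_j}}.
\]

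Finally, setting $K:=\max_j k_j$ and $\alpha:=\beta^K\in(0,1)$, and using $\|w\|_{L_2(\omega)}\le \|w\|_{L_2(\Omega)}$, lowering the exponent from $\beta^{k_j}\ge\alpha$ to $\alpha$ only weakens the bound, so that $\|w\|_{L_2(B_\rho(y_j))}\lesssim \|w\|_{L_2(\omega)}^{\alpha}\|w\|_{L_2(\Omega)}^{1-\alpha}$ uniformly in $j$; summing the squared inequalities over the finite cover yields the claim. The argument is essentially bookkeeping: all analytic content is packaged in the three balls estimate, and the only delicate points I anticipate are (a) constructing the chains while controlling their lengths uniformly in $j$, and (b) collapsing the chain-dependent exponents $\beta^{k_j}$ to a single $\alpha$. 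The trivial case $G=\omega$ mentioned in the footnote requires no propagation and gives $\alpha=1$.
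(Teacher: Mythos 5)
Your argument is correct and follows exactly the route the paper indicates: the paper does not prove this statement itself but cites it as \cite[Thm.~5.1]{10.1}, noting it is a corollary of the three balls estimate, and that corollary is obtained precisely by the chain-of-balls propagation you carry out (one-step interpolation, induction along the chain with the geometric accumulation of constants absorbed via $\sum_i\beta^i\le(1-\beta)^{-1}$, and collapsing the exponents to $\alpha=\beta^K$ using $\|w\|_{L_2(\omega)}\le\|w\|_{L_2(\Omega)}$). The only cosmetic point is that the centres $y_j$ should be chosen in $G$ rather than $\overline{G}$ so that the polygonal paths inside the domain $G$ can actually terminate at them; since the balls $B_\rho(y)$ with $y\in G$ already cover the compact set $\overline{G}$, this costs nothing.
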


\subsection{Conditional stability for the UC problem}
Recall that we consider the \emph{ultra-weak} variational formulation of the UC problem of finding $u \in L_2(\Omega)$ such that
\be \label{eq:42}
\left\{\begin{array}{r@{}c@{}ll}
-\int_\Omega u \triangle v\,dx &\,\,\,=\,\,\,& \ell(v)& (v \in H^2_0(\Omega)),\\
u|_{\omega}&\,\,\,=\,\,\,&q & \text{on }\omega.
\end{array}
\right.
\ee

\begin{theorem} [Conditional stability UC ultra-weak] \label{RSprop:2}
With $A \in \cL\big(L_2(\Omega),H^{-2}(\Omega) \times L_2(\omega)\big)$ defined by
$$
A :=(B,C),\quad (Bz)(v):= -\int_\Omega  z \triangle v\,dx \quad(v \in H^2_0(\Omega)),\quad Cz:=z|_\omega, 
$$
for bounded domains $\emptyset \neq \omega \subseteq G \Subset \Omega$, and $\alpha \in (0,1]$ from Theorem~\ref{RSprop:1}, it holds that
$$
\|z\|_{L_2(G)} \!\lesssim \! \|Az\|_{H^{-2}(\Omega) \times L_2(\omega)}^\alpha \!\big(\|z\|_{L_2(\Omega)}\!+ \!\|Az\|_{H^{-2}(\Omega) \times L_2(\omega)}\big)^{1-\alpha} \quad (z \in L_2(\Omega)).
$$
\end{theorem}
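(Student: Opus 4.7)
The plan is to reduce the ultra-weak conditional stability for $z \in L_2(\Omega)$ to the harmonic propagation of smallness (Theorem~\ref{RSprop:1}) by means of a splitting $z = w+\psi$, where $w \in L_2(\Omega)$ is harmonic and $\psi \in L_2(\Omega)$ is controlled by $\|Bz\|_{H^{-2}(\Omega)}$. Once such a splitting is available, applying Theorem~\ref{RSprop:1} to $w$ and the triangle inequality to relate $w$-norms to $z$- and $Az$-norms immediately produces the desired estimate; the terms $\|\psi\|_{L_2(G)}$ and $\|\psi\|_{L_2(\omega)}$ get absorbed into the $\|Az\|^\alpha$ factor because $\|\psi\|_{L_2(\Omega)}$ is already bounded by (a multiple of) $\|Az\|_{H^{-2}(\Omega)\times L_2(\omega)}$.

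To construct $\psi$, I would use the biharmonic problem: given $f := Bz \in H^{-2}(\Omega)$, find $\phi \in H^2_0(\Omega)$ with
\[
 \int_\Omega \triangle \phi \,\triangle v\, dx = f(v) \qquad (v \in H^2_0(\Omega)),
\]
and set $\psi := -\triangle \phi \in L_2(\Omega)$. Coercivity comes from the identity $\|\triangle v\|_{L_2(\Omega)}^2 = \sum_{i,j}\|\partial_{ij} v\|_{L_2(\Omega)}^2$ for $v \in H^2_0(\Omega)$ (two integrations by parts on test functions, then density), which together with Poincaré yields $\|\triangle \cdot\|_{L_2(\Omega)} \eqsim \|\cdot\|_{H^2_0(\Omega)}$. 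Lax–Milgram then gives a unique $\phi$ with $\|\psi\|_{L_2(\Omega)} = \|\triangle \phi\|_{L_2(\Omega)} \lesssim \|f\|_{H^{-2}(\Omega)}$. By construction $B\psi = Bz$, so $w := z - \psi$ satisfies $Bw = 0$, i.e., $\triangle w = 0$ distributionally on $\Omega$, whence $w$ is harmonic by Weyl's lemma.

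With the splitting in place, Theorem~\ref{RSprop:1} (or the trivial case $G=\omega$ noted in the footnote) gives $\|w\|_{L_2(G)} \lesssim \|w\|_{L_2(\omega)}^\alpha \|w\|_{L_2(\Omega)}^{1-\alpha}$. Estimating $\|w\|_{L_2(\omega)} \le \|Cz\|_{L_2(\omega)} + \|\psi\|_{L_2(\Omega)}$, $\|w\|_{L_2(\Omega)} \le \|z\|_{L_2(\Omega)} + \|\psi\|_{L_2(\Omega)}$, and then inserting $\|\psi\|_{L_2(\Omega)} \lesssim \|Bz\|_{H^{-2}(\Omega)} \le \|Az\|_{H^{-2}(\Omega)\times L_2(\omega)}$, the proof concludes with $\|z\|_{L_2(G)} \le \|w\|_{L_2(G)} + \|\psi\|_{L_2(\Omega)}$ and the observation that the residual term $\|Az\|_{H^{-2}(\Omega)\times L_2(\omega)}$ can be rewritten as $\|Az\|^\alpha \|Az\|^{1-\alpha} \le \|Az\|^\alpha(\|z\|_{L_2(\Omega)}+\|Az\|)^{1-\alpha}$.

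The main obstacle is the construction of $\psi$: one needs a bounded right-inverse of $B\colon L_2(\Omega) \to H^{-2}(\Omega)$, which amounts to the coercivity of the biharmonic form on $H^2_0(\Omega)$ uniformly in the (possibly only Lipschitz) geometry of $\Omega$. This is exactly what the interior-integration-by-parts identity above provides, so no regularity of $\partial\Omega$ beyond what is already assumed for Theorem~\ref{RSprop:1} is needed. Everything else is bookkeeping with the triangle inequality.
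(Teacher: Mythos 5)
Your proposal is correct and follows essentially the same route as the paper: split $z$ into a component $\psi\in\ran\triangle|_{H^2_0(\Omega)}$ that represents $Bz$ with $\|\psi\|_{L_2(\Omega)}\lesssim\|Bz\|_{H^{-2}(\Omega)}$, observe via Weyl's lemma that the remainder is harmonic, and then combine Theorem~\ref{RSprop:1} with the triangle inequality exactly as you do. The only cosmetic differences are that you realize the lift by Lax--Milgram for the biharmonic Dirichlet problem instead of the paper's inf-sup argument on $\cH(\Omega)=\ran\triangle|_{H^2_0(\Omega)}$ (these produce the identical element), and that you justify $\|\triangle v\|_{L_2(\Omega)}\eqsim\|v\|_{H^2(\Omega)}$ on $H^2_0(\Omega)$ by the elementary identity $\|\triangle v\|_{L_2(\Omega)}^2=\sum_{i,j}\|\partial_{ij}v\|_{L_2(\Omega)}^2$ plus Poincar\'e rather than by $H^2$-regularity of the Poisson problem on a covering ball.
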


\begin{proof} It holds that
\be \label{RSeq:2}
\|\bigcdot\|_{H^2(\Omega)} \eqsim \|\triangle \bigcdot\|_{L_2(\Omega)} \,\,\,\text{ on } H^2_0(\Omega).
\ee
Indeed, consider a ball $Z \supset \Omega$. From the $H^2$-regularity of the Poisson problem  on $Z$ with homogeneous Dirichlet boundary conditions, for $\varphi \in \cD(\Omega)$ we have
$$
\|\varphi\|_{H^2(\Omega)} = \|\varphi\|_{H^2(Z)} \lesssim \|\triangle \varphi\|_{L_2(Z)}=\|\triangle \varphi\|_{L_2(\Omega)}.
$$

Let $\cH(\Omega):=\ran \triangle|_{H_0^2(\Omega)} \subset L_2(\Omega)$. As a consequence of \eqref{RSeq:2}, we have that $\cH(\Omega)$ is closed, and therefore is a Hilbert space.\footnote{From $H^2_0(\Omega)$ being a proper subspace of $H^1_{\triangle,0}(\Omega):=\{z \in H^1_0(\Omega)\colon \triangle z \in L_2(\Omega)\}$ equipped with graph norm, and $\triangle \in \cL(H^1_{\triangle,0}(\Omega),L_2(\Omega))$ being boundedly invertible, we infer that $\cH(\Omega)$ is a proper subspace of $L_2(\Omega)$.}
The bilinear form $(z,v) \mapsto -\int_\Omega z \triangle v\,dx$ on $\cH(\Omega) \times H^2_0(\Omega)$ is bounded,
\begin{alignat*}{2}
\sup_{0 \neq v \in H^2_0(\Omega)}\frac{\int_\Omega z \triangle v\,dx}{\|v\|_{H^2(\Omega)}} &\eqsim
\sup_{0 \neq v \in H^2_0(\Omega)}\frac{\int_\Omega z \triangle v\,dx}{\|\triangle v\|_{L_2(\Omega)}} =\|z\|_{L_2(\Omega)} &&\quad(z \in \cH(\Omega)),\intertext{and}
\sup_{0 \neq z \in \cH(\Omega)} \frac{\int_\Omega z \triangle v\,dx}{\|z\|_{L_2(\Omega)}} & = \|\triangle v\|_{L_2(\Omega)} \eqsim \|v\|_{H^2(\Omega)} &&\quad (v \in H^2_0(\Omega)).
\end{alignat*}
We conclude that for $z \in L_2(\Omega)$, there exists a unique $\widehat{z} \in \cH(\Omega)$ with
$$
-\int_\Omega \widehat{z}  \triangle v\,dx=-\int_\Omega z  \triangle v\,dx\quad (v \in H^2_0(\Omega)),
$$
and $\|\widehat{z}\|_{L_2(\Omega)} \lesssim \|Bz\|_{H^{-2}(\Omega)}$.

We set $\widetilde{z}:=z-\widehat{z}$. Then $\int_\Omega \widetilde{z} \triangle \varphi\,dx=0$ ($\varphi \in \cD(\Omega)$), and according to Weyl's lemma (\cite{311.7}) this means that $\widetilde{z}$ is harmonic.

As before, repeated applications of the triangle-inequality, and an application of Theorem~\ref{RSprop:1} to $\widetilde{z}$ show that
\begin{align*}
& \|z\|_{L_2(G)}  \leq \|\widehat{z}\|_{L_2(G)}+\|\widetilde{z}\|_{L_2(G)} \lesssim \|Bz\|_{H^{-2}(\Omega)}+\|\widetilde{z}\|_{L_2(\omega)}^\alpha  \|\widetilde{z}\|_{L_2(\Omega)}^{1-\alpha}\\
&  \lesssim \|Bz\|_{H^{-2}(\Omega)}+(\|z\|_{L_2(\omega)}+\|\widehat{z}\|_{L_2(\omega)})^\alpha (\|z\|_{L_2(\Omega)}+\|\widehat{z}\|_{L_2(\Omega)})^{1-\alpha}\\
&  \lesssim (\|C z\|_{L_2(\omega)}+\|Bz\|_{H^{-2}(\Omega)})^\alpha (\|z\|_{L_2(\Omega)}+\|Bz\|_{H^{-2}(\Omega)}+\|Cz\|_{L_2(\omega)})^{1-\alpha},
\end{align*}
which is the estimate to be proven.
\end{proof}

\begin{remark} \label{rem:1} The operator $A \in \cL\big(L_2(\Omega),H^{-2}(\Omega) \times L_2(\omega)\big)$ is injective. Indeed, above proof shows that $Bz=0$ implies that $z$ is harmonic. Using $z|_{\omega}=0$, Theorem~\ref{RSprop:1} now shows that $z$ vanishes on any $G  \Subset\Omega$, so that $z=0$.
\end{remark}

\subsection{Conditional stability for the Cauchy problem}
By multiplying \eqref{eq:41} with smooth $v$ with $\supp v \subset \Omega \cup \Sigma$, and by integrating-by-parts twice whilst substituting the Neumann and Dirichlet boundary conditions on $\Sigma$, we obtain the problem to find $u \in L_2(\Omega)$ such that
$$
(Bu)(v):=-\int_\Omega u \triangle v\,dx=f(v):=\ell(v)+\int_{\Sigma} \psi v-g \partial_n v\,ds \quad (v \in H^2_{0,\Sigma^c}(\Omega)),
$$
where $\Sigma^c:=\partial\Omega \setminus \overline{\Sigma}$, and $H^2_{0,\Sigma^c}(\Omega):=\{v \in C^\infty(\Omega) \cap H^2(\Omega)\colon \supp v \cap \Sigma^c=\emptyset\}$.

\begin{theorem}[Conditional stability Cauchy ultra-weak] \label{thm:condstabCauchy} Let $\Omega \subset \R^n$ be a bounded domain, $\Sigma \neq \emptyset$ an open, connected Lipschitz subset of $\partial\Omega$, and $G \subset \Omega$ a domain that has a positive distance to $\Sigma^c$.
Then there exists an $\alpha \in (0,1)$ such that
$$
\|z\|_{L_2(G)} \lesssim \|B z\|^\alpha_{(H^2_{0,\Sigma^c}(\Omega))'}(\|z\|_{L_2(\Omega)}+ \|B z\|_{(H^2_{0,\Sigma^c}(\Omega))'})^{1-\alpha}\quad(z \in L_2(\Omega)).
$$
\end{theorem}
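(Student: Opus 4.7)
The proof should follow the template of Theorem~\ref{RSprop:2}, with the boundary piece $\Sigma$ playing the role that $\omega$ had in the UC case. Since $H^2_0(\Omega) \subset H^2_{0,\Sigma^c}(\Omega)$, the dual norms satisfy $\|Bz\|_{H^{-2}(\Omega)} \leq \|Bz\|_{(H^2_{0,\Sigma^c}(\Omega))'}$, so the decomposition step in the proof of Theorem~\ref{RSprop:2} goes through verbatim. It yields $\widehat z \in \cH(\Omega)$ with $\int_\Omega \widehat z\,\triangle v\,dx = \int_\Omega z\,\triangle v\,dx$ for every $v \in H^2_0(\Omega)$ and $\|\widehat z\|_{L_2(\Omega)} \lesssim \|Bz\|_{(H^2_{0,\Sigma^c}(\Omega))'}$. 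Weyl's lemma then forces $\widetilde z := z - \widehat z$ to be harmonic in $\Omega$, and the triangle inequality combined with the obvious estimate $\|B\widehat z\|_{(H^2_{0,\Sigma^c}(\Omega))'} \lesssim \|\widehat z\|_{L_2(\Omega)}$ gives $\|B\widetilde z\|_{(H^2_{0,\Sigma^c}(\Omega))'} \lesssim \|Bz\|_{(H^2_{0,\Sigma^c}(\Omega))'}$.

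The heart of the matter is then a quantitative unique continuation for the harmonic $\widetilde z$: one needs $\alpha \in (0,1)$ such that
\[
\|\widetilde z\|_{L_2(G)} \lesssim \|B\widetilde z\|_{(H^2_{0,\Sigma^c}(\Omega))'}^\alpha \|\widetilde z\|_{L_2(\Omega)}^{1-\alpha}.
\]
The heuristic is that for smooth $v \in H^2_{0,\Sigma^c}(\Omega)$, two integrations by parts yield $B\widetilde z(v) = \int_\Sigma (\widetilde z\,\partial_n v - \partial_n \widetilde z \cdot v)\,ds$, so that $\|B\widetilde z\|_{(H^2_{0,\Sigma^c}(\Omega))'}$ serves as a weak dual norm of the Cauchy data of $\widetilde z$ on $\Sigma$. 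My plan is to exploit this by enlarging the domain across $\Sigma$: using that $\dist(\overline{\Sigma},\Sigma^c) > 0$, attach a small Lipschitz bump $U$ along $\Sigma$ so that $\widetilde\Omega := \Omega \cup \Sigma \cup U$ is an enlarged Lipschitz domain containing $\Sigma$ in its interior, extend $\widetilde z$ harmonically across $\Sigma$ to a function $Z \in L_2(\widetilde\Omega)$ with $\|Z\|_{L_2(U)} \lesssim \|B\widetilde z\|_{(H^2_{0,\Sigma^c}(\Omega))'}$, and then apply Theorem~\ref{RSprop:1} inside $\widetilde\Omega$ with witness subdomain $\omega \Subset U$ and target $G \Subset \widetilde\Omega$. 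Combined with the trivial bounds $\|\widehat z\|_{L_2(G)} \leq \|\widehat z\|_{L_2(\Omega)}$ and $\|\widetilde z\|_{L_2(\Omega)} \leq \|z\|_{L_2(\Omega)} + \|\widehat z\|_{L_2(\Omega)}$, this reproduces the final chain of inequalities that closed the proof of Theorem~\ref{RSprop:2}.

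The main obstacle is the harmonic-extension step. The Cauchy data of $\widetilde z$ is only controlled in the dual of a trace space that is implicit in $H^2_{0,\Sigma^c}(\Omega)$, and a priori $\widetilde z \in L_2(\Omega)$ has no classical trace on $\Sigma$. Making the extension $Z$ rigorous with the correct norm bound will likely require a duality argument in fractional Sobolev spaces on $\Sigma$; alternatively, one may sidestep extension altogether in favour of iterating the three-balls inequality \cite[Thm.~2.1]{10.1} along a finite Harnack chain travelling from a neighbourhood of $\Sigma$ into $G$, which is precisely where the hypothesis $\dist(G,\Sigma^c) > 0$ is used to ensure that such a chain fits inside $\Omega$ and that the exponents composing along it amalgamate into a single $\alpha \in (0,1)$.
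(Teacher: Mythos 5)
Your reduction to the harmonic part is fine as far as it goes, but the proof has a genuine gap exactly where you flag it, and the two escape routes you sketch do not close it. After the decomposition $z=\widehat z+\widetilde z$ you need a quantitative unique continuation estimate for a harmonic $\widetilde z\in L_2(\Omega)$ whose Cauchy data on $\Sigma$ is controlled only in the dual norm $\|B\widetilde z\|_{(H^2_{0,\Sigma^c}(\Omega))'}$. Theorem~\ref{RSprop:1} propagates smallness from an \emph{interior} subdomain $\omega\Subset\Omega$, so the Harnack-chain alternative fails at its first link: you have no interior set on which $\widetilde z$ is known to be small, only a weak dual norm of boundary data, and the three-balls estimate cannot be started from that. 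The harmonic-extension alternative is the right intuition, but constructing $Z$ with $\|Z\|_{L_2(U)}\lesssim\|B\widetilde z\|_{(H^2_{0,\Sigma^c}(\Omega))'}$ for an $L_2$ function with no classical trace is precisely the hard analytic content you would need to supply, and you do not.

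The paper's proof avoids this entirely by a different, essentially one-line idea: extend $z$ itself (not $\widetilde z$) by \emph{zero} across $\Sigma$ to an enlarged domain $\widetilde\Omega:=\Omega\cup\widetilde\Sigma(r)$, where $\widetilde\Sigma\Subset\Sigma$ is a slightly shrunk piece of $\Sigma$ chosen so that $\widetilde\Omega$ stays away from $\Sigma^c$. Writing $\widetilde z$ for this zero extension, every $v\in H^2_0(\widetilde\Omega)$ restricts to an element of $H^2_{0,\Sigma^c}(\Omega)$, which gives $\|\widetilde B\widetilde z\|_{H^{-2}(\widetilde\Omega)}\le\|Bz\|_{(H^2_{0,\Sigma^c}(\Omega))'}$ for free; and $\widetilde z$ vanishes identically on the added bump, so one may place the observation subdomain $\widetilde\omega$ there and apply the already-proved UC result Theorem~\ref{RSprop:2} on $\widetilde\Omega$ with $\|C\widetilde z\|_{L_2(\widetilde\omega)}=0$. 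The harmonic extension across $\Sigma$ that you were trying to build by hand then appears automatically, inside the proof of Theorem~\ref{RSprop:2} on $\widetilde\Omega$, via Weyl's lemma applied to $\widetilde z-\widehat{\widetilde z}$. I would recommend replacing your second and third paragraphs by this zero-extension argument; your first paragraph (the reduction and the dual-norm comparison $\|Bz\|_{H^{-2}(\Omega)}\le\|Bz\|_{(H^2_{0,\Sigma^c}(\Omega))'}$) then becomes unnecessary.
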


\begin{proof} When $\overline{G} \cap \Sigma=\emptyset$, enlarge $G$ to a domain $\widehat{G}$ with $\overline{\widehat{G}} \cap \Sigma \neq \emptyset$, where, just as $G$, $\widehat{G}$ has a positive distance to $\Sigma^c$.

Let $\widetilde{\Sigma} \neq \emptyset $ be a relatively open and connected subset of $\Sigma$ with $\Sigma \cap \widehat{G} \subset \widetilde{\Sigma} \Subset \Sigma$.
With $\widetilde{\Sigma}(\eps):=\{x \in \R^n\colon \dist(x,\widetilde{\Sigma})<\eps\}$, let $r>0$ be such that
$\widetilde{\Omega}:=\Omega \cup \widetilde{\Sigma}(r)$ has a positive distance to $\Sigma^c$.
Set $\widetilde{G}:=\widehat{G} \cup \widetilde{\Sigma}(r/2)$. Then $\widetilde{G}$ is connected, and from $\widetilde{G}\setminus \widetilde{\Sigma}(r/2) \Subset \Omega$ and $\widetilde{\Sigma}(r/2) \Subset \widetilde{\Sigma}(r)$ it follows that $\widetilde{G} \Subset \widetilde{\Omega}$.
Finally, let $\widetilde{\omega}$ be an open and connected set with $\widetilde{\omega} \Subset \widetilde{G} \setminus \overline{\Omega}$, see Figure~\ref{fig:geo}.
\begin{figure}[h]
  \input{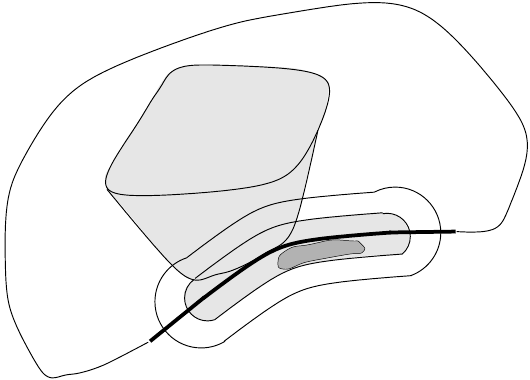_t}
    \caption{Illustration with proof of Theorem~\ref{thm:condstabCauchy}}
  \label{fig:geo}
\end{figure}

For $z \in L_2(\Omega)$, let $\widetilde{z} \in L_2(\widetilde{\Omega})$ be the zero extension of $z$ to $\widetilde{\Omega}$.
Setting $\widetilde{B} \in \cL(L_2(\widetilde{\Omega}),H^{-2}(\widetilde{\Omega}))$ by $(\widetilde{B} w)(v):=-\int_{\widetilde{\Omega}} w \triangle v\,dx$, using that $\widetilde{z}|_{\widetilde{\omega}}=0$ an application of Theorem~\ref{RSprop:2} with $(\omega,G,\Omega,B)$ reading as $(\widetilde{\omega},\widetilde{G},\widetilde{\Omega},\widetilde{B})$ shows that for some constant $\alpha \in (0,1)$,
$$
 \|\widetilde{z}\|_{L_2(\widetilde{G})} \lesssim \|\widetilde{B} \widetilde{z}\|_{H^{-2}(\widetilde{\Omega})}^\alpha \big(\|\widetilde{z}\|_{L_2(\widetilde{\Omega})}+\|\widetilde{B} \widetilde{z}\|_{H^{-2}(\widetilde{\Omega})}\big)^{1-\alpha}.
$$
The proof is completed by $\|z\|_{L_2(G)} \leq \|\widetilde{z}\|_{L_2(\widetilde{G})}$, $\|\widetilde{z}\|_{L_2(\widetilde{\Omega})}=\|z\|_{L_2(\Omega)}$, and
$$
\sup_{0 \neq \widetilde{v} \in H_0^2(\widetilde{\Omega})}\!\! \frac{\int_{\widetilde{\Omega}} \widetilde{z} \triangle \widetilde{v} \, dx}{\|\widetilde{v}\|_{H^2(\widetilde{\Omega})}} \leq \hspace*{-.5em}
\sup_{ \{\widetilde{v} \in H_0^2(\widetilde{\Omega})\colon \widetilde{v}|_\Omega \neq 0\}}\!\!  \frac{\int_{\Omega} z \triangle \widetilde{v}|_{\Omega} \, dx}{\|\widetilde{v}|_\Omega\|_{H^2(\Omega)}} \leq
\hspace*{-.5em} \sup_{0 \neq v \in H_{0,\Sigma^c}^2(\Omega)} \!\!  \frac{\int_{\Omega} z \triangle v \, dx}{\|v\|_{H^2(\Omega)}},
$$
by $\Sigma^c \subset \partial\widetilde{\Omega}$.
\end{proof}

\begin{remark} \label{rem:2} The operator $B \in \cL\big(L_2(\Omega),(H^2_{0,\Sigma^c}(\Omega))'\big)$ is injective. Indeed, above proof shows that $Bz=0$ implies that $\widetilde{B} \widetilde{z}=0$ and $\widetilde{z}|_{\widetilde{\omega}}=0$. The arguments from Remark~\ref{rem:1} now show that $\widetilde{z}=0$ and thus $z=0$.
\end{remark}

\newpage

\section{Least squares approximations for the UC problem}  \label{sec:3}
We consider the UC problem \eqref{eq:42}, and postpone the discussion of the (small) adaptations needed for the Cauchy problem to Sect.~\ref{sec:7}. Our goal is to (approximately) recover $u \in L_2(\Omega)$ from the equation $Au=(Bu,Cu)=(\ell,q)$, with $A$ defined in Theorem~\ref{RSprop:2}. For inexact data $(\ell,q)$, generally this equation has no solution which is why we search a least squares approximation. Moreover, following the general approach from \cite{58.7} for conditionally stable PDEs, we consider a regularized least squares functional. The regularization is designed to ensure control of the $L_2(\Omega)$-norm of the obtained approximation by an absolute multiple of $\|u\|_{L_2(\Omega)}$, so that we can successfully apply the conditional stability estimate to its error.

Specifically, let $(X^\delta)_{\delta \in I}$ be some (infinite) family of finite dimensional subspaces of $L_2(\Omega)$.
For $\delta \in I$ and a suitable $0 \leq \eps \lesssim 1$, we approximate $u \in L_2(\Omega)$ by the minimizer over $z \in X^\delta$ of the \emph{regularized least squares functional}
\be \label{eq:3}
z \mapsto \|A z-(\ell,q)\|_{H^{-2}(\Omega) \times L_2(\omega)}^2+\eps^2\|z\|_{L_2(\Omega)}^2.
\ee

To deal with the non-computable norm on $H^{-2}(\Omega)$, let $(Y^\delta)_{\delta \in I}$ be a family of finite dimensional (or closed) subspaces of $H^2_0(\Omega)$
for which 
\be \label{eq:infsup}
\varrho:=\inf_{\delta \in I} \inf_{\{z \in X^\delta\colon Bz\neq 0\}} \sup_{0 \neq v \in Y^\delta}\frac{|(Bz)(v)|}{\|B z\|_{H^{-2}(\Omega)}\|\triangle v\|_{L_2(\Omega)}}>0.
\ee
Notice that here we have used \eqref{RSeq:2}, i.e., $\|\bigcdot\|_{H^2(\Omega)} \eqsim \|\triangle \bigcdot\|_{L_2(\Omega)}$ on $H^2_0(\Omega)$.
We will refer to $X^\delta$ and $Y^\delta$ as \emph{trial} and \emph{test spaces}.
By replacing \mbox{$\|\bigcdot\|_{H^{-2}(\Omega)}$} by the discretized dual norm $\sup_{0 \neq v \in Y^\delta} \frac{|\bigcdot\,(v)|}{\|\triangle v\|_{L_2(\Omega)}}$ in \eqref{eq:3}, an approximation to $u$ is obtained that is qualitatively equally good. Specifically, we have the following result.

\begin{theorem}[{\cite[Thm.~4.1]{58.7}}] \label{thm:ls}
Assume \eqref{eq:infsup}, and let $(\ell,q) \in H^{-2}(\Omega) \times L_2(\omega)$. Then\footnote{Because $A$ is injective, the proof given in \cite{58.7} for $\eps>0$ extends to $\eps \geq 0$. Cf.~also the forthcoming generalization Theorem~\ref{thm:lsnonconf}.}
\be \label{eq:4}
u_\eps^\delta:=\argmin_{z \in X^\delta} \big\{\sup_{0 \neq v \in Y^\delta}\frac{|(B z-\ell)(v)|^2}{\|\triangle v\|_{L_2(\Omega)}^2}+\|Cz-q\|_{L_2(\omega)}^2+\eps^2\|z\|_{L_2(\Omega)}^2 \big\}
\ee
satisfies, for $u \in L_2(\Omega)$,
\be \label{eq:14}
\|A(u-u_\eps^\delta)\|_{H^{-2}(\Omega)\times L_2(\omega)}+\eps\|u-u_\eps^\delta\|_{L_2(\Omega)} \lesssim \cE_{\rm data}+\cE_{\rm appr}(\delta) +\eps\|u\|_{L_2(\Omega)},
\ee
where
$$
\cE_{\rm data}:=\|Au-(\ell,q)\|_{H^{-2}(\Omega) \times L_2(\omega)},\quad \cE_{\rm appr}(\delta):=\min_{z \in X^\delta} \|u-z\|_{L_2(\Omega)}.
$$
\end{theorem}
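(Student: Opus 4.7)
The plan is to imitate the standard argument for least squares problems with discretized dual norms: combine the minimization property of $u_\eps^\delta$ against a well-chosen competitor $z \in X^\delta$ with the inf-sup stability \eqref{eq:infsup} in order to pass between the discretized dual norm $F^\delta(w):=\sup_{0\neq v\in Y^\delta}|(Bw-\ell)(v)|/\|\triangle v\|_{L_2(\Omega)}$ and the true $H^{-2}(\Omega)$-norm on the image $B X^\delta$. Abbreviate the functional under the argmin in \eqref{eq:4} by $J^\delta(z)^2$, and define its continuous counterpart by
\[
J(z)^2:=\|Bz-\ell\|_{H^{-2}(\Omega)}^2+\|Cz-q\|_{L_2(\omega)}^2+\eps^2\|z\|_{L_2(\Omega)}^2.
\]
Because $Y^\delta\subset H^2_0(\Omega)$ we trivially have $F^\delta(z)\leq \|Bz-\ell\|_{H^{-2}(\Omega)}$, hence $J^\delta\leq J$.

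First I would pick as competitor a best $L_2(\Omega)$-approximation $z^\star\in X^\delta$ to $u$ and bound $J^\delta(z^\star)$ from above. Using boundedness of $B$ and $C$ on $L_2(\Omega)$ and the triangle inequality, each of $\|Bz^\star-\ell\|_{H^{-2}(\Omega)}$, $\|Cz^\star-q\|_{L_2(\omega)}$ and $\eps\|z^\star\|_{L_2(\Omega)}$ is bounded by a constant multiple of $\cE_{\rm data}+\cE_{\rm appr}(\delta)+\eps\|u\|_{L_2(\Omega)}$ (here I use $\eps\lesssim 1$). The minimization property $J^\delta(u_\eps^\delta)\leq J^\delta(z^\star)\leq J(z^\star)$ then yields the same upper bound for each of the three summands $F^\delta(u_\eps^\delta)$, $\|Cu_\eps^\delta-q\|_{L_2(\omega)}$ and $\eps\|u_\eps^\delta\|_{L_2(\Omega)}$.

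The key remaining step, and the one I expect to be the only genuine obstacle, is to upgrade $F^\delta(u_\eps^\delta)$ to the true $H^{-2}(\Omega)$-norm of $Bu_\eps^\delta-\ell$, since \eqref{eq:infsup} controls only images of elements of $X^\delta$. I would split
\[
\|Bu_\eps^\delta-\ell\|_{H^{-2}(\Omega)} \leq \|B(u_\eps^\delta-z^\star)\|_{H^{-2}(\Omega)} + \|Bz^\star-\ell\|_{H^{-2}(\Omega)},
\]
apply the inf-sup \eqref{eq:infsup} to $u_\eps^\delta-z^\star\in X^\delta$ to get $\varrho\|B(u_\eps^\delta-z^\star)\|_{H^{-2}(\Omega)}\leq F^\delta(u_\eps^\delta)+F^\delta(z^\star)\leq F^\delta(u_\eps^\delta)+\|Bz^\star-\ell\|_{H^{-2}(\Omega)}$, and insert the bounds already obtained. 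This gives $\|Bu_\eps^\delta-\ell\|_{H^{-2}(\Omega)}\lesssim \cE_{\rm data}+\cE_{\rm appr}(\delta)+\eps\|u\|_{L_2(\Omega)}$.

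Finally, combining the three component bounds gives an upper bound of the desired form for $\|Au_\eps^\delta-(\ell,q)\|_{H^{-2}(\Omega)\times L_2(\omega)}+\eps\|u_\eps^\delta\|_{L_2(\Omega)}$; triangle inequalities $\|A(u-u_\eps^\delta)\|\leq \cE_{\rm data}+\|Au_\eps^\delta-(\ell,q)\|$ and $\eps\|u-u_\eps^\delta\|_{L_2(\Omega)}\leq \eps\|u\|_{L_2(\Omega)}+\eps\|u_\eps^\delta\|_{L_2(\Omega)}$ then deliver \eqref{eq:14}. Concerning the case $\eps=0$ mentioned in the footnote: the same argument works once existence and uniqueness of $u_0^\delta$ are established, which follows from the injectivity of $A$ together with the equivalence of $F^\delta$ and $\|B\cdot\|_{H^{-2}(\Omega)}$ on $X^\delta$, ensuring that $z\mapsto F^\delta(z)^2+\|Cz\|_{L_2(\omega)}^2$ is a strictly convex coercive functional on $X^\delta$.
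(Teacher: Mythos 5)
Your argument is correct and complete, but it follows a different route from the one the paper (implicitly) takes. The paper does not prove Theorem~\ref{thm:ls} itself --- it cites \cite[Thm.~4.1]{58.7} --- and the in-paper proof of the generalization, Theorem~\ref{thm:lsnonconf}, proceeds through the saddle-point characterization \eqref{eq:saddle2}: it first establishes uniform well-posedness of the mixed operator $M^\delta$ in Lemma~\ref{lem:1} (via an LBB argument with auxiliary variables), and then bounds $\nrm z-u_\eps^\delta\nrm_{\eps,\delta}$ by the residual of $(z,0)$ in that system. You instead argue directly from the minimization property: bound $J^\delta(u_\eps^\delta)\leq J^\delta(z^\star)\lesssim J(z^\star)$ for the best $L_2$-competitor $z^\star$, and then upgrade the discretized dual norm of $Bu_\eps^\delta-\ell$ to the true $H^{-2}(\Omega)$-norm by applying \eqref{eq:infsup} to $u_\eps^\delta-z^\star\in X^\delta$ and a triangle inequality --- which is indeed the one genuinely non-trivial step, and you handle it correctly (note only that $F^\delta\leq\|B\bigcdot-\ell\|_{H^{-2}(\Omega)}$ holds as $\lesssim$ via \eqref{RSeq:2}, not as an exact inequality, which is harmless). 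Your direct argument is shorter and more elementary, and your treatment of $\eps=0$ (positive definiteness of the quadratic form on $X^\delta$ from \eqref{eq:infsup} plus injectivity of $A$) matches what the footnote asserts. What the paper's mixed-formulation route buys is that it simultaneously controls the auxiliary component $v_\eps^\delta$ of the system that is actually solved numerically, and it is structured to absorb the extra consistency terms that appear once the test space is nonconforming, which is why the paper presents the stability of $M^\delta$ as the central lemma rather than the minimization property.
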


\begin{corollary} \label{RScorol:1} In the setting of  Theorem~\ref{thm:ls},  let $\eps>0$ be such that
\be \label{RSeq:eps}
\eps \|u\|_{L_2(\Omega)} \eqsim \cE_{\rm data}+\cE_{\rm appr}(\delta).
\ee
Then with $\alpha \in (0,1]$ from Theorem~\ref{RSprop:1}, it holds that
\be \label{RSeq:error-estimate}
\|u-u_\eps^\delta\|_{L_2(G)}  \lesssim 
\big( \cE_{\rm data}+\cE_{\rm appr}(\delta)\big)^\alpha \max\big(\cE_{\rm data}+\cE_{\rm appr}(\delta),\|u\|_{L_2(\Omega)}\big)^{1-\alpha}.
\ee
\end{corollary}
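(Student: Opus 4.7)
The plan is straightforward: apply the conditional stability estimate of Theorem~\ref{RSprop:2} to $z := u-u_\eps^\delta \in L_2(\Omega)$, and then control each of the two factors on its right-hand side by means of Theorem~\ref{thm:ls}.

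Concretely, Theorem~\ref{RSprop:2} applied to $z=u-u_\eps^\delta$ gives
$$
\|u-u_\eps^\delta\|_{L_2(G)} \lesssim \|A(u-u_\eps^\delta)\|_{H^{-2}(\Omega)\times L_2(\omega)}^\alpha\,\bigl(\|u-u_\eps^\delta\|_{L_2(\Omega)}+\|A(u-u_\eps^\delta)\|_{H^{-2}(\Omega)\times L_2(\omega)}\bigr)^{1-\alpha},
$$
so the task reduces to bounding the two norms $\|A(u-u_\eps^\delta)\|_{H^{-2}(\Omega)\times L_2(\omega)}$ and $\|u-u_\eps^\delta\|_{L_2(\Omega)}$. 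Both bounds are read off directly from \eqref{eq:14}: by the choice \eqref{RSeq:eps}, the right-hand side $\cE_{\rm data}+\cE_{\rm appr}(\delta)+\eps\|u\|_{L_2(\Omega)}$ is of the same order as $\cE_{\rm data}+\cE_{\rm appr}(\delta)$ and simultaneously as $\eps\|u\|_{L_2(\Omega)}$. Using the first equivalence on the $\|A(u-u_\eps^\delta)\|$-term of \eqref{eq:14} yields
$$
\|A(u-u_\eps^\delta)\|_{H^{-2}(\Omega)\times L_2(\omega)}\lesssim \cE_{\rm data}+\cE_{\rm appr}(\delta),
$$
and using the second equivalence on the $\eps\|u-u_\eps^\delta\|_{L_2(\Omega)}$-term, followed by division by $\eps>0$, yields
$$
\|u-u_\eps^\delta\|_{L_2(\Omega)}\lesssim \|u\|_{L_2(\Omega)}.
$$

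Substituting both inequalities into the conditional stability estimate gives
$$
\|u-u_\eps^\delta\|_{L_2(G)} \lesssim \bigl(\cE_{\rm data}+\cE_{\rm appr}(\delta)\bigr)^\alpha \bigl(\|u\|_{L_2(\Omega)}+\cE_{\rm data}+\cE_{\rm appr}(\delta)\bigr)^{1-\alpha},
$$
and bounding the second factor by a constant multiple of $\max\bigl(\cE_{\rm data}+\cE_{\rm appr}(\delta),\|u\|_{L_2(\Omega)}\bigr)$ produces exactly \eqref{RSeq:error-estimate}. There is no real obstacle; the only point warranting attention is the dual role of the calibration \eqref{RSeq:eps}, which is engineered precisely so that the regularization contribution $\eps\|u\|_{L_2(\Omega)}$ in \eqref{eq:14} is absorbed into $\cE_{\rm data}+\cE_{\rm appr}(\delta)$ for the $A$-term, while at the same time the division by $\eps$ turns the $L_2(\Omega)$-bound into one in terms of $\|u\|_{L_2(\Omega)}$, making the two factors feed the $\alpha$ and $(1-\alpha)$ exponents with exactly the right scales.
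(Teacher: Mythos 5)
Your proposal is correct and follows exactly the paper's own argument: substitute the calibration \eqref{RSeq:eps} into \eqref{eq:14} to obtain $\|A(u-u_\eps^\delta)\|_{H^{-2}(\Omega)\times L_2(\omega)}\lesssim \cE_{\rm data}+\cE_{\rm appr}(\delta)$ and $\|u-u_\eps^\delta\|_{L_2(\Omega)}\lesssim\|u\|_{L_2(\Omega)}$, then feed both into the conditional stability estimate of Theorem~\ref{RSprop:2}. No discrepancies to report.
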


\begin{proof} Substituting \eqref{RSeq:eps} in \eqref{eq:14} shows that
$$
\left\{
\begin{array}{rcl}
\|u-u_\eps^\delta\|_{L_2(\Omega)} & \lesssim&  \|u\|_{L_2(\Omega)},\\
\|A(u-u_\eps^\delta)\|_{H^{-2}(\Omega)\times L_2(\omega)} & \lesssim& \cE_{\rm data}+\cE_{\rm appr}(\delta),
\end{array}
\right.
$$
so that an application of the conditional stability result from Theorem~\ref{RSprop:2}  gives the result.
\end{proof}

\begin{remark}[Optimality of $\alpha$ in \eqref{RSeq:error-estimate}] \label{RSrem:opt}
For $(X^\delta)_{\delta \in I}$ with $\overline{\cup_{\delta \in I} X^\delta}=L_2(\Omega)$, an estimate as \eqref{RSeq:error-estimate} 
with an exponent\footnote{In two or three dimensions, the value of $\widehat{\alpha}$ is known for $\omega$, $G$, and $\Omega$ being balls centered around some common point, see \cite[Thm.~1]{35.9298}.} $\widetilde{\alpha}>\widehat{\alpha}:=\sup\{\alpha\colon \eqref{RSeq:8} \text{ is valid for } \alpha\}$ cannot be valid for \emph{any} scheme $(\ell,q)\mapsto (u^\delta)_{\delta \in I} \subset \prod_{\delta \in I}X^\delta$ that maps zero data $\ell=0=q$ onto $u^\delta \equiv 0$. Indeed, suppose such a scheme does exist, and consider $\ell=0=q$ and a harmonic $u \in L_2(\Omega)$.
Then
$$
\|u\|_{L_2(G)}  \lesssim \inf_{\delta \in I} (\|u\|_{L_2(\omega)} + \min_{z \in X^\delta} \|u-z\|_{L_2(\Omega)})^{\tilde{\alpha}}
\|u\|_{L_2(\Omega)}^{1-\tilde{\alpha}}=\|u\|_{L_2(\omega)}^{\tilde{\alpha}}
\|u\|_{L_2(\Omega)}^{1-\tilde{\alpha}},
$$
in contradiction to the definition of $\widehat{\alpha}$.
\end{remark}

The approximation $u_\eps^\delta$ defined in \eqref{eq:4} is \emph{computed} as the first component of the pair $(u_\eps^\delta,v_\eps^\delta) \in X^\delta \times Y^\delta$ that solves the \emph{saddle point problem}
\be \label{eq:saddle1}
\left\{
\begin{array}{@{}c@{}c@{}c@{}cr}
\langle \triangle v_\eps^\delta, \triangle \tilde{v}\rangle_{L_2(\Omega)} \hspace*{-2em}& +  (B u_\eps^\delta)(\tilde{v}) & =\,\, & \ell(\tilde{v}) & (\tilde{v}\in Y^\delta),\\
(B \tilde{z})(v_\eps^\delta) & -\langle C u_\eps^\delta,C \tilde{z}\rangle_{L_2(\omega)}-\eps^2\langle u_\eps^\delta,\tilde{z}\rangle_{L_2(\Omega)} & =\,\, &- \langle q,C \tilde{z} \rangle_{L_2(\omega)} & (\tilde{z} \in X^\delta),
\end{array}
\right.
\ee
(see e.g.~\cite{58.7}).
\medskip

Finally, the uniform inf-sup or `LBB'-stability condition \eqref{eq:infsup} is \emph{equivalent} to existence of uniformly bounded `Fortin' operators. Precisely (e.g.~\cite[Prop.~5.1]{249.992}), assuming $\ran B|_{X^\delta} \neq \{0\}$ and $Y^\delta \neq \{0\}$, let
\be \label{fortin}
Q^\delta \in \cL(H^2_0(\Omega),Y^\delta) \text{ with } (B X^\delta)\big(\ran (\identity - Q^\delta)\big)=0.
\ee
Then $\varrho^\delta := \inf_{\{z \in X^\delta\colon B z \neq 0\}} \sup_{0 \neq v \in Y^\delta} \frac{|(B z)(v)|}{\|B z\|_{H^{-2}(\Omega)} \|\triangle v\|_{L_2(\Omega)}} \geq \|Q^\delta\|_{\cL(H_0^2(\Omega),H_0^2(\Omega))}^{-1}$.

Conversely, if $\varrho^\delta>0$, then there exists a $Q^\delta$ as in \eqref{fortin}, being even a projector onto $Y^\delta$, with
$\|Q^\delta\|_{\cL(H_0^2(\Omega),H_0^2(\Omega))} = 1/\varrho^\delta$.

\begin{remark}[Discussion] To compare with results from the literature, let us choose the regularization parameter $\eps>0$ such that $\eps\|u\|_{L_2(\Omega)} \eqsim \cE_{\rm appr}(\delta)$, which corresponds to the choice made in Corollary~\ref{RScorol:1} as long as $\cE_{\rm appr}(\delta) \gtrsim \cE_{\rm data}$. With this choice of $\eps$, \eqref{eq:14} shows 
$$
\left\{
\begin{array}{rcl}
\|u-u_\eps^\delta\|_{L_2(\Omega)} & \lesssim& \big(1+\frac{ \cE_{\rm data}}{\cE_{\rm appr}(\delta)}\big) \|u\|_{L_2(\Omega)},\\
\|A(u-u_\eps^\delta)\|_{H^{-2}(\Omega)\times L_2(\omega)} & \lesssim&\cE_{\rm data}+\cE_{\rm appr}(\delta).
\end{array}
\right.
$$
Assuming that $\cE_{\rm data}+\cE_{\rm appr}(\delta) \lesssim \|u\|_{L_2(\Omega)}$, substituting these bounds in the conditional stability result from Theorem~\ref{RSprop:2} gives
\begin{align} \nonumber
\|u-u_\eps^\delta\|_{L_2(G)} &\lesssim \big(\cE_{\rm data}+\cE_{\rm appr}(\delta)\big)^\alpha \Big(\big(1+\frac{ \cE_{\rm data}}{\cE_{\rm appr}(\delta)}\big) \|u\|_{L_2(\Omega)}+\cE_{\rm data}+\cE_{\rm appr}(\delta)\Big)^{1-\alpha}\\ \nonumber
&\eqsim \big(\cE_{\rm data}+\cE_{\rm appr}(\delta)\big)^\alpha
\big(1+\frac{ \cE_{\rm data}}{\cE_{\rm appr}(\delta)}\big)^{1-\alpha} \|u\|_{L_2(\Omega)}^{1-\alpha}\\ \label{RSeq:9}
&\eqsim \Big(\cE_{\rm appr}(\delta)^\alpha +\frac{\cE_{\rm data}}{\cE_{\rm appr}(\delta)^{1-\alpha}}\Big)\|u\|_{L_2(\Omega)}^{1-\alpha}.
\end{align}
This upper  bound is proportional to $\cE_{\rm appr}(\delta)^\alpha \|u\|_{L_2(\Omega)}^{1-\alpha}$ as long as $\cE_{\rm appr}(\delta) \gtrsim \cE_{\rm data}$, and thus 
is proportional to $(\cE_{\rm data})^\alpha \|u\|^{1-\alpha}$ when $\cE_{\rm appr}(\delta) \eqsim \cE_{\rm data}$, whereas it increases unboundedly for
 $\cE_{\rm appr}(\delta) \downarrow 0$ when $\cE_{\rm appr}(\delta) \lesssim \cE_{\rm data}$.
 \footnote{Although it results in a (qualitatively) equal minimal upper bound for $\|u-u_\eps^\delta\|_{L_2(G)}$, our choice in Corollary~\ref{RScorol:1} of $\eps \|u\|_{L_2(\Omega)} \eqsim \cE_{\rm appr}(\delta)+\cE_{\rm data}$ prevents this latter increase.}

Still for the purpose of comparison, let us consider $X^\delta$ to be a finite element space of degree $k$ w.r.t.~a \emph{quasi-uniform} partition of $\Omega$ with mesh-size $h_\delta$ into uniformly shape regular $n$-simplices. 

For the primal-dual weakly consistent regularized finite element method studied in \cite{35.9298}, where the UC problem is considered with forcing term $\ell=0$, it was shown that
\be \label{RSeq:10}
\|u-u^\delta\|_{L_2(G)} \lesssim h_\delta^{\alpha k}\|u\|_{H^{k+1}(\Omega)}+h_\delta^{(\alpha-1)k} \|u|_\omega-q\|_{L_2(\omega)},
\ee
and so
$$
\|u-u^\delta\|_{L_2(G)} \lesssim \|u|_\omega-q\|_{L_2(\omega)}^\alpha \|u\|_{H^{k+1}(\Omega)}^{1-\alpha}
$$
for the minimum
 $$
 h_\delta
 \eqsim \Big(\frac{\|u|_\omega-q\|_{L_2(\omega)}}{\|u\|_{H^{k+1}(\Omega)}}\Big)^{\frac{1}{k}}.
 $$
 It was shown that the value of $\alpha$ in the exponents in \eqref{RSeq:10} is optimal, but below we see that the value of $k$ in these exponents can be improved.

Indeed, using that $\cE_{\rm appr}(\delta) \lesssim h_\delta^{k+1}|u|_{H^{k+1}(\Omega)}$ when $u \in H^{k+1}(\Omega)$, and by substituting this upper bound into \eqref{RSeq:9} shows
$$
\|u-u_\eps^\delta\|_{L_2(G)} \lesssim 
\Big(\big(h_\delta^{k+1}|u|_{H^{k+1}(\Omega)}\big)^\alpha +\frac{\cE_{\rm data}}{\big(h_\delta^{k+1}|u|_{H^{k+1}(\Omega)}\big)^{1-\alpha}}\Big)\|u\|_{L_2(\Omega)}^{1-\alpha}.
$$
Solving $h_\delta^{k+1}|u|_{H^{k+1}(\Omega)} \eqsim \cE_{\rm data}$ gives
\be \label{RSeq:12}
\|u-u_\eps^\delta\|_{L_2(G)} \lesssim (\cE_{\rm data})^\alpha \|u\|_{L_2(\Omega)}^{1-\alpha}
\ee
for
$$
h_\delta \eqsim \big(\frac{\cE_{\rm data}}{|u|_{H^{k+1}(\Omega)}}\big)^{\frac{1}{k+1}}.
$$

So our least squares method gives a qualitatively similar bound, but, thanks to the use of an ultra-weak formulation, for a larger mesh-size, and so at lower computational cost (cf.~\cite[Rem.~3]{35.9298}). Alternatively, by taking finite elements of one lower degree $k-1$, we find \eqref{RSeq:12} for 
$h_\delta \eqsim \big(\frac{\cE_{\rm data}}{|u|_{H^{k}(\Omega)}}\big)^{\frac{1}{k}}$, so with a mesh-size as in \cite{35.9298}, but under a relaxed regularity condition.

Finally, notice that our least squares method is not restricted to quasi-uniform meshes, meaning that with appropriately locally refined meshes
regularity conditions can be relaxed even further. On the other hand, our least squares method requires the construction of a family of pairs of trial and test spaces that satisfies the uniform inf-sup condition \eqref{eq:infsup}.
\end{remark}

What remains to show is that our least squares method is practical by constructing test spaces, with dimensions proportional to those of the trial spaces, that give uniform inf-sup stability. This will be done in the following section for piecewise constant and continuous piecewise linear finite element trial spaces. 

\section{Examples of trial and (conforming) test spaces} \label{sec:4}
Let $\tria^\delta$ be a conforming (uniformly) shape regular triangulation of a \emph{polygon} $\Omega \subset \R^2$.
With $\cN^\delta$ ($\cN_\circ^\delta$) and $\cE^\delta$ ($\cE_\circ^\delta$) we denote the sets of its (internal) vertices and (internal) edges. For $K \in \tria^\delta$ and $e \in \cE^\delta$, we set $h_K:=\diam(K)$ and $h_e:=\diam(e)$.

First, let $X^\delta$ be the space of \emph{piecewise constants} w.r.t.~$\tria^\delta$.
For $z \in X^\delta$ and $v \in H^2_0(\Omega)$, integration-by-parts shows that
\be \label{eq:5}
(Bz)(v)=-\sum_{K \in \tria^\delta}\int_{K} z \triangle v \,dx=-\sum_{e \in \cE_\circ^\delta} \llbracket z \rrbracket_e \int_e \partial_{n_e} v\,ds.
\ee

For $Y^\delta$ we take the \emph{Hsieh–Clough–Tocher} (HCT) macro-element finite element space
w.r.t.~$\tria^\delta$, i.e.,
\be\label{eq:HCT}
Y^\delta=\big\{v \in H^2_0(\Omega)\colon v|_{K} \in \{z \in C^1(K)\colon z|_{K_i} \in P_3(K_i),\,1 \leq i \leq 3\} (K \in \tria^\delta)\big\},
\ee
where $\{K_i\colon 1 \leq i \leq 3\}$ is the subdivision of $K$ obtained by connecting the centroid of $K$ with the midpoints of its edges, see Figure~\ref{fig:1}.
\begin{figure}[h]
\includegraphics[width=4cm]{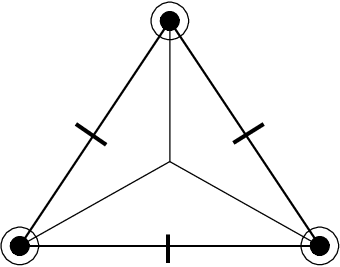}
\caption{\label{fig:1} HTC macro-element and its 12 DoF.}
\end{figure}
The vanishing trace and vanishing normal trace conditions on $\partial\Omega$ are enforced by setting the DoFs associated to points on $\partial\Omega$ to zero.

It is known how to construct a projector $P^\delta\colon H^2_0(\Omega) \rightarrow Y^\delta$ for which, for $K \in \tria^\delta$,
\be \label{eq:6}
\|(\identity -P^\delta)v\|_{H^k(K)} \lesssim h_K^{\ell-k} |v|_{H^\ell(\cup\omega(K))} \,\,\,(0 \leq k \leq 2 \leq \ell\leq 4),\hspace*{-0.5em}
\ee
where $\omega(K):=\{K' \in \tria^\delta\colon K \cap K' \neq \emptyset\}$ (e.g.~see the adaptation of the Scott-Zhang projector for Lagrange elements to Hermite elements in \cite{75.51}).

To construct from $P^\delta$ a valid Fortin operator, in view of \eqref{eq:5} we have to adapt it such that it preserves averages of normal derivatives over $e \in \cE_\circ^\delta$.
Let $b_e$ denote the nodal basis function in $Y^\delta$ that corresponds to the DoF associated to the midpoint $m_e$ of $e$.
We have $\partial_{n_e} b_e \in P_2(e)$, $\partial_{n_e} b_e(m_e)=1$, and $\partial_{n_e} b_e$ vanishes at the endpoints of $e$. So
$$
\int_e\partial_{n_e} b_e\,ds = \tfrac23 h_e. 
$$
Furthermore, both $b_e$ and its normal derivative vanish at all edges of $\tria^\delta$ other than $e$, and
\be  \label{eq:8}
|b_e|_{H^k(\Omega)} \eqsim h_e^{2-k} \quad (0 \leq k \leq 2).
\ee
Setting
$$
Q^\delta v:=P^\delta v+\sum_{e \in \cE_\circ^\delta} \frac{\int_e \partial_{n_e}(\identity-P^\delta)v\,ds}{\int_e \partial_{n_e} b_e\,ds} b_e,
$$
we have
\be \label{eq:9}
\int_e \partial_{n_e} Q^\delta v\,ds =\int_e \partial_{n_e} v\,ds \quad (e \in \cE_\circ^\delta).
\ee
From
\be \nonumber
\begin{split}
|\int_e \partial_{n_e}(\identity-P^\delta)v\,ds| & \lesssim h_e^{\frac12} \|\partial_{n_e}(\identity-P^\delta)v\|_{L_2(e)} \\
& \lesssim
\|(\identity-P^\delta)v\|_{H^1(K)}+h_e|(\identity-P^\delta)v|_{H^2(K)},
\end{split}
\ee
by a trace inequality, where $K \in \tria^\delta$ contains the edge $e$, in combination with \eqref{eq:6}--\eqref{eq:8}, we conclude that
$$
\|(\identity -Q^\delta)v\|_{H^k(K)} \lesssim h_K^{\ell-k} |v|_{H^\ell(\cup\omega(K))} \quad(0 \leq k \leq 2 \leq \ell\leq 4),
$$
so that in particular $\|Q^\delta\|_{\cL(H_0^2(\Omega),H_0^2(\Omega))} \lesssim1$. Together with \eqref{eq:9} and \eqref{eq:5} it shows that $Q^\delta$ is valid Fortin operator, so that the family of pairs $(X^\delta,Y^\delta)_{\delta \in I}$ satisfies the uniform inf-sup condition \eqref{eq:infsup}, and Theorem~\ref{thm:ls} and Corollary~\ref{RScorol:1} are applicable.
\medskip

As a second option, let $X^\delta$ be the space of \emph{continuous piecewise linears} w.r.t.~$\tria^\delta$.
For $z \in X^\delta$ and $v \in H^2_0(\Omega)$, integration-by-parts shows that
$$
(Bz)(v)=\sum_{e \in \cE_\circ^\delta} \llbracket \partial_{n_e} z \rrbracket_e \int_e v\,ds.
$$
So a valid Fortin operator should preserve averages over $e \in \cE_\circ^\delta$.

For each $e \in \cE_\circ^\delta$, $e=K_1 \cap K_2$ with $K_1,K_2 \in \tria^\delta$, let $b_e \in H^2_0(K_1 \cup K_2)$ with $\int_e b_e \,ds \gtrsim h_e$, $|b_e|_{H^k(\Omega)} \lesssim h_e^{1-k}$ ($0 \leq k \leq 2$). Such a $b_e$ can e.g.~be found in the HCT space w.r.t.~a refined conforming uniformly shape regular triangulation in which each edge in $\tria^\delta$ has been cut. Then
$$
\tilde{Q}^\delta v:=P^\delta v+\sum_{e \in \cE_\circ^\delta} \frac{\int_e (\identity-P^\delta)v\,ds}{\int_e b_e\,ds} b_e,
$$
satisfies $\int_e \tilde{Q}^\delta v\,ds =\int_e  v\,ds$ ($e \in \cE_\circ^\delta$), and similar arguments as applied above show
$$
\|(\identity -\tilde{Q}^\delta)v\|_{H^k(K)} \lesssim h_K^{\ell-k} |v|_{H^\ell(\cup\omega(K))} \quad(0 \leq k \leq 2 \leq \ell\leq 4),
$$
We conclude that $(X^\delta,Y^\delta+\Span\{b_e\colon e \in \cE_\circ^\delta\})_{\delta \in I}$ satisfies the uniform inf-sup condition \eqref{eq:infsup}, so that Theorem~\ref{thm:ls} and Corollary~\ref{RScorol:1} are applicable.
The number of local DoF of this extended test finite element space is $15$.
\medskip

Following above lines, also for higher order finite element trial spaces appropriate corresponding test finite element spaces in $H^2_0(\Omega)$ can be found. It is however fair to say that $C^1$-finite element spaces are not easy to implement. Therefore in the following we investigate whether nonconforming spaces can be applied instead.

\section{Nonconforming test spaces} \label{sec:5}
As before, let $(X^\delta)_{\delta \in I}$ be some (infinite) family of finite dimensional subspaces of $L_2(\Omega)$.
Let $(Y^\delta)_{\delta \in I}$ be a family of (finite dimensional) Hilbert spaces, which is \emph{not} necessarily included in $H^2_0(\Omega)$, with norms $\|\bigcdot\|_{Y^\delta}$. 
Let $E^\delta \in \cL(Y^\delta,H^2_0(\Omega))$, in the literature sometimes called a \emph{companion operator} or \emph{smoother}, with
\be \label{eq:12}
\sup_{\delta \in I} \|E^\delta\|_{\cL(Y^\delta,H^2_0(\Omega))}<\infty,
\ee
and let $B^\delta \in \cL\big(L_2(\Omega),(H^2_0(\Omega) + Y^\delta)'\big)$ be such that
\be \label{eq:15}
(B^\delta\bigcdot)(v)=(B \bigcdot)(v) \quad(v \in H^2_0(\Omega)).
\ee
We assume that a family of (generalized) Fortin operators $Q^\delta \in \cL\big(H^2_0(\Omega),Y^\delta\big)$ exists with 
\be \label{eq:13}
\sup_{\delta \in I} \|Q^\delta\|_{\cL(H^2_0(\Omega),Y^\delta)}<\infty, \,\,\,(B^\delta X^\delta)\big(\ran (\identity-Q^\delta)\big)=0.
\ee

From \eqref{eq:15} and \eqref{eq:13}, for $z \in X^\delta$ we have
\begin{align*}
\|B z\|_{H^{-2}(\Omega)}&=
\sup_{0 \neq v \in H^2_0(\Omega)}\frac{|(B^\delta z)(Q^\delta v)|}{\|\triangle v\|_{L_2(\Omega)}} \leq 
\|Q^\delta\|_{ \cL(H^2_0(\Omega),Y^\delta)} \sup_{0 \neq v \in H^2_0(\Omega)}\frac{|(B^\delta z)(Q^\delta v)|}{\|Q^\delta v\|_{Y^\delta}} 
\\ 
&\lesssim \sup_{0 \neq v \in Y^\delta }\frac{|(B^\delta z)(v)|}{\|v\|_{Y^\delta}}=\|B^\delta z\|_{(Y^\delta)'}
 \quad(\delta \in I).
\end{align*}
For $0 \leq \eps \lesssim 1$, defining on $L_2(\Omega)$ the norms
\begin{align*}
\nrm \bigcdot \nrm_{\eps}&:=\sqrt{\|A\bigcdot\|_{H^{-2}(\Omega)\times L_2(\omega)}^2+\eps^2 \|\bigcdot\|_{L_2(\Omega)}^2},
\\
\nrm \bigcdot \nrm_{\eps,\delta}&:=\sqrt{\|B^\delta \bigcdot\|_{(Y^\delta)'}^2+\|C \bigcdot\|_{L_2(\omega)}^2+\eps^2 \|\bigcdot\|_{L_2(\Omega)}^2},
\end{align*}
we have 
$\nrm\bigcdot\nrm_\eps \lesssim \|\bigcdot\|_{L_2(\Omega)}$, and
\be \label{eq:10}
 \nrm \bigcdot \nrm_{\eps} \lesssim \nrm \bigcdot \nrm_{\eps,\delta} \quad\text{on }X^\delta.
\ee

For $(\ell^\delta,q) \in (Y^\delta)' \times L_2(\omega)$, where $\ell^\delta$ approximates $\ell \in H^{-2}(\Omega)$, we define
\be \label{eq:lssol}
u^\delta_\eps:=\argmin_{z \in X^\delta} \big\{\|B^\delta z-\ell^\delta\|_{(Y^\delta)'}^2+\|C z-q\|_{L_2(\omega)}^2+\eps^2\|z\|_{L_2(\Omega)}^2\big\}.
\ee
It is the first component of the pair $(u_\eps^\delta,v_\eps^\delta) \in X^\delta \times Y^\delta$ that solves the \emph{saddle point problem}
\be \label{eq:saddle2}
\left\{
\begin{array}{@{}c@{}c@{}c@{}cr}
\langle v_\eps^\delta,\tilde{v}\rangle_{Y^\delta} & +  (B^\delta u_\eps^\delta)(\tilde{v}) & =\,\, & \ell^\delta(\tilde{v}) & (\tilde{v}\in Y^\delta),\\
(B^\delta \tilde{z})(v_\eps^\delta) & -\langle C u_\eps^\delta,C \tilde{z}\rangle_{L_2(\omega)}-\eps^2\langle u_\eps^\delta,\tilde{z}\rangle_{L_2(\Omega)} & =\,\, &- \langle q,C \tilde{z}\rangle_{L_2(\omega)} & (\tilde{z} \in X^\delta).
\end{array}
\right.
\ee

First we establish well-posedness of \eqref{eq:saddle2} in an appropriate sense.

\begin{lemma} \label{lem:1} The linear mapping
\begin{align*}
&M^\delta:=X^\delta \times Y^\delta \rightarrow (X^\delta \times Y^\delta)'\colon
(u_\eps^\delta,v_\eps^\delta) \mapsto \\
&\big((\tilde{z},\tilde{v}) \mapsto \langle v_\eps^\delta,\tilde{v}\rangle_{Y^\delta} +  (B^\delta u_\eps^\delta)(\tilde{v}) +(B^\delta \tilde{z})(v_\eps^\delta) -\langle C u_\eps^\delta,C \tilde{z}\rangle_{L_2(\omega)}-\eps^2\langle u_\eps^\delta,\tilde{z}\rangle_{L_2(\Omega)}\big)
\end{align*}
is invertible, and $
\nrm u_\eps^\delta  \nrm_{\eps,\delta}+\|v_\eps^\delta\|_{Y^\delta} \eqsim  \|M^\delta(u_\eps^\delta,v_\eps^\delta)\|_{((X^\delta,\nrm\bigcdot\nrm_{\eps,\delta})\times Y^\delta)'}$.
\end{lemma}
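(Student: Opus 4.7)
My plan is to establish invertibility of $M^\delta$ by a direct injectivity argument and then derive the two-sided norm equivalence by testing against a carefully chosen collection of vectors; finite-dimensionality does the rest.

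For injectivity, I assume $M^\delta(u,v)=0$ and test with $(\tilde z,\tilde v)=(0,v)$ and $(\tilde z,\tilde v)=(u,0)$ in the defining identity. This produces $\|v\|_{Y^\delta}^2+(B^\delta u)(v)=0$ and $(B^\delta u)(v)-\|Cu\|_{L_2(\omega)}^2-\eps^2\|u\|_{L_2(\Omega)}^2=0$. Subtracting gives
$$\|v\|_{Y^\delta}^2+\|Cu\|_{L_2(\omega)}^2+\eps^2\|u\|_{L_2(\Omega)}^2=0,$$
so $v=0$, $Cu=0$, and $u=0$ whenever $\eps>0$. In the remaining case $\eps=0$, the first row of $M^\delta(u,v)=0$ also forces $(B^\delta u)(\tilde v)=0$ for all $\tilde v\in Y^\delta$. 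Invoking the Fortin-based inequality $\|Bu\|_{H^{-2}(\Omega)}\lesssim \|B^\delta u\|_{(Y^\delta)'}$ derived just before the lemma, together with $Cu=0$, I conclude $Au=0$, so injectivity of $A$ from Remark~\ref{rem:1} yields $u=0$. Since the problem is finite-dimensional, injectivity implies invertibility.

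For the norm equivalence, write $f:=M^\delta(u,v)$ and $N:=\|f\|_{((X^\delta,\nrm\bigcdot\nrm_{\eps,\delta})\times Y^\delta)'}$. The bound $N\lesssim \nrm u\nrm_{\eps,\delta}+\|v\|_{Y^\delta}$ is immediate by Cauchy--Schwarz on each term of the bilinear form, using that $\|B^\delta u\|_{(Y^\delta)'}\leq \nrm u\nrm_{\eps,\delta}$. For the converse, the two tests above already yield
$$\|v\|_{Y^\delta}^2+\|Cu\|_{L_2(\omega)}^2+\eps^2\|u\|_{L_2(\Omega)}^2\leq N\bigl(\nrm u\nrm_{\eps,\delta}+\|v\|_{Y^\delta}\bigr).$$
To recover $\|B^\delta u\|_{(Y^\delta)'}$, I select (by finite-dimensionality) $\tilde v^\ast\in Y^\delta$ with $\|\tilde v^\ast\|_{Y^\delta}=1$ and $(B^\delta u)(\tilde v^\ast)=\|B^\delta u\|_{(Y^\delta)'}$, and test with $(0,\tilde v^\ast)$ to obtain $\|B^\delta u\|_{(Y^\delta)'}\leq N+\|v\|_{Y^\delta}$. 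Squaring and adding to the previous display, and setting $S:=\nrm u\nrm_{\eps,\delta}+\|v\|_{Y^\delta}$, produces a quadratic inequality of the form $S^2\lesssim N^2+NS$, from which $S\lesssim N$ follows by an elementary completion-of-the-square argument.

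The only mildly delicate point is the case $\eps=0$ of injectivity, which is where both the Fortin-based discrete inf-sup and the continuous injectivity of $A$ (Remark~\ref{rem:1}) must be invoked simultaneously; elsewhere the proof is essentially a finite-dimensional version of the standard Brezzi-type saddle-point argument, and no uniformity issue arises beyond what is already encoded in the hypotheses \eqref{eq:12}--\eqref{eq:13}.
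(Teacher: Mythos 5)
Your proof is correct. It differs from the paper's in execution rather than in substance. The paper introduces the auxiliary variables $\theta_\eps^\delta=-Cu_\eps^\delta$ and $\mu_\eps^\delta=-\eps u_\eps^\delta$, rewrites $M^\delta(u_\eps^\delta,v_\eps^\delta)=f^\delta$ as an extended symmetric saddle-point system on $X^\delta\times\big(Y^\delta\times L_2(\omega)\times L_2(\Omega)\big)$ whose coupling form $e$ is shown to satisfy an inf-sup condition with constant $1$ relative to $\nrm\bigcdot\nrm_{\eps,\delta}$ (the supremizer being $(\,$Riesz representative of $B^\delta z,\,Cz,\,\eps z)$), and then concludes by invoking the standard LBB stability theory. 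You instead work directly with $M^\delta$: the tests $(0,v)$ and $(u,0)$ extract $\|v\|_{Y^\delta}^2+\|Cu\|_{L_2(\omega)}^2+\eps^2\|u\|_{L_2(\Omega)}^2\le NS$, the test $(0,\tilde v^\ast)$ with the Riesz supremizer of $B^\delta u$ recovers $\|B^\delta u\|_{(Y^\delta)'}\le N+\|v\|_{Y^\delta}$, and the resulting quadratic inequality $S^2\lesssim N^2+NS$ closes the argument with absolute constants, so uniformity in $\delta$ and $\eps$ is manifest; these test choices are, in effect, the same supremizers the paper feeds into the abstract theorem. What your version buys is self-containedness (no appeal to Brezzi-type theory) and, notably, an explicit treatment of the case $\eps=0$, where invertibility rests on the Fortin-based inequality $\|Bu\|_{H^{-2}(\Omega)}\lesssim\|B^\delta u\|_{(Y^\delta)'}$ together with the injectivity of $A$ from Remark~\ref{rem:1}; the paper leaves this point implicit, even though its own inf-sup argument silently requires $\nrm\bigcdot\nrm_{0,\delta}$ to be definite on $X^\delta$ for the same reason. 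What the paper's version buys is brevity and a structure that transfers verbatim to other regularization or data terms.
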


\begin{proof} The uniform boundedness of $M^\delta$, i.e., the inequality `$\gtrsim$', follows from the definition of $\nrm\bigcdot\nrm_{\eps,\delta}$.
To show the other direction, consider an equation $M^\delta(u_\eps^\delta,v_\eps^\delta)=f^\delta \in (X^\delta \times Y^\delta)'$.
We introduce the additional variables 
$\theta_{\eps}^\delta:=-C u_{\eps}^\delta$ and $\mu_{\eps}^\delta:=-\eps u_{\eps}^\delta$, and the bilinear form
$$
e(z;(\tilde{v},\tilde{\theta},\tilde{\mu})):=(B^\delta z)(\tilde{v})+\langle C z, \tilde{\theta}\rangle_{L_2(\omega)}+\eps \langle z,\tilde{\mu}\rangle_{L_2(\Omega)}
$$
on $X^\delta\times \big(Y^\delta\times L_2(\omega) \times  L_2(\Omega)\big)$.
Then, one verifies that $M^\delta(u_\eps^\delta,v_\eps^\delta)=f^\delta$ is equivalent to
finding $(u^\delta_{\eps},v^\delta_{\eps},\theta^\delta_{\eps},\mu^\delta_{\eps}) \in X^\delta \times \big(Y^\delta \times L_2(\omega) \times L_2(\Omega)\big)$ such that
\be \nonumber
\begin{split}
& \langle (v^\delta_{\eps},\theta^\delta_{\eps},\mu^\delta_{\eps}),(\tilde{v},\tilde{\theta},\tilde{\mu})\rangle_{Y^\delta \times L_2(\omega) \times L_2(\Omega)}+e(u_{\eps}^\delta;(\tilde{v},\tilde{\theta},\tilde{\mu}))\\
&\hspace*{6cm}  +
e(\tilde{z};(v_{\eps}^\delta,\theta^\delta_{\eps},\mu^\delta_{\eps}))=f^\delta(\tilde{z},\tilde{v})
\end{split}
\ee
for all $(\tilde{z},\tilde{v},\tilde{\theta},\tilde{\mu}) \in X^\delta \times \big(Y^\delta \times L_2(\omega) \times L_2(\Omega)\big)$.

Given $z \in X^\delta$, let $\tilde{v} \in Y^\delta$ be such that $\|\tilde{v}\|_{Y^\delta}=\|B^\delta z\|_{(Y^\delta)'}$ and $(B^\delta z)(\tilde{v}) = \|B^\delta z\|_{(Y^\delta)'}^2$. Taking $(\tilde{\theta},\tilde{\mu}):=(Cz,\eps z)$, we have
\begin{align*}
 e(z;(\tilde{v},\tilde{\theta},\tilde{\mu}))
&= \|B^\delta z\|_{(Y^\delta)'}^2 +\|C z\|_{L_2(\omega)}^2+\eps^2\|z\|_{L_2(\Omega)}^2
\\
&= \nrm z\nrm_{\eps,\delta}  \sqrt{
\|\tilde{v}\|_{Y^\delta}^2 +\|\tilde{\theta}\|_{L_2(\omega)}^2+\|\tilde{\mu}\|_{L_2(\Omega)}^2}.
\end{align*}
From this LBB stability, we conclude
$$
\nrm u^\delta_{\eps}\nrm_{\eps,\delta}+\|v^\delta_{\eps}\|_{Y^\delta}+\|\theta^\delta_{\eps}\|_{L_2(\omega)}+\|\mu^\delta_{\eps}\|_{L_2(\Omega)}  \lesssim  \|f^\delta\|_{(X^\delta,\nrm \bigcdot\nrm_{\eps,\delta})\times Y^\delta)'}. \qedhere
$$
\end{proof}

The following theorem generalizes Theorem~\ref{thm:ls} to possible nonconforming test spaces.

\begin{theorem} \label{thm:lsnonconf} Let $(\ell^\delta,q) \in (Y^\delta)' \times L_2(\omega)$ and $u \in L_2(\Omega)$, and assume \eqref{eq:12}--\eqref{eq:13}. Then the solution $u_\eps^\delta \in X^\delta$ of \eqref{eq:lssol} satisfies
\be \label{eq:23}
\begin{split}
\|A(u-&u_\eps^\delta)\|_{H^{-2}(\Omega)\times L_2(\omega)}+\eps\|u-u_\eps^\delta\|_{L_2(\Omega)}  \lesssim\\
&\|Au-(\ell,q)\|_{H^{-2}(\Omega) \times L_2(\omega)}+\eps\|u\|_{L_2(\Omega)}+\\
&\min_{z \in X^\delta} \Big\{\| u-z\|_{L_2(\Omega)}+  \sup_{0 \neq v \in Y^\delta}\frac{|(B^\delta z)(v-E^\delta v)+\ell(E^\delta v)-\ell^\delta(v)|}{\|v\|_{Y^\delta}}\Big\}.
\end{split}
\ee
Consequently, if, in addition,
\be \label{eq:22}
\begin{split}
\sup_{0 \neq v \in Y^\delta}&\frac{|(B^\delta z)(v-E^\delta v)+\ell(E^\delta v)-\ell^\delta(v)|}{\|v\|_{Y^\delta}} \lesssim \\
&\hspace*{5em}\|\ell-Bu\|_{H^{-2}(\Omega)}+\| u-z\|_{L_2(\Omega)}+\osc_\delta(\ell),
\end{split}
\ee
where $\osc_\delta(\ell)$ is some `\emph{data-oscillation}' term, then
$$
\|A(u-u_\eps^\delta)\|_{H^{-2}(\Omega)\times L_2(\omega)}\!+\eps\|u-u_\eps^\delta\|_{L_2(\Omega)}  \lesssim
\cE_{\rm data}+\cE_{\rm appr}(\delta)+\osc_\delta(\ell)+\eps\|u\|_{L_2(\Omega)}.
$$
\end{theorem}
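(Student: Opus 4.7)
The plan is to exploit the quadratic least-squares structure of \eqref{eq:lssol}. Since
\[
J(z) := \|B^\delta z-\ell^\delta\|_{(Y^\delta)'}^2+\|Cz-q\|_{L_2(\omega)}^2+\eps^2\|z\|_{L_2(\Omega)}^2
\]
is a quadratic functional of $z$ whose Hessian bilinear form is exactly $\nrm\bigcdot\nrm_{\eps,\delta}^2$, and Lemma~\ref{lem:1} guarantees that $u_\eps^\delta$ is its unique minimizer on $X^\delta$, expansion about the minimum gives the Pythagoras-type identity
\[
J(z) = J(u_\eps^\delta) + \nrm z - u_\eps^\delta\nrm_{\eps,\delta}^2 \qquad (z \in X^\delta),
\]
and in particular $\nrm z - u_\eps^\delta\nrm_{\eps,\delta} \leq J(z)^{1/2}$. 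Combined with the triangle inequality, with \eqref{eq:10}, and with the trivial bound $\nrm\bigcdot\nrm_\eps \lesssim \|\bigcdot\|_{L_2(\Omega)}$, this reduces \eqref{eq:23} to bounding $J(z)^{1/2}$ by its right-hand side for an arbitrary $z \in X^\delta$.

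Next I would control $\|B^\delta z - \ell^\delta\|_{(Y^\delta)'}$ via the companion operator: for $v \in Y^\delta$, split $v = E^\delta v + (v - E^\delta v)$ and use the conformity property \eqref{eq:15} on $E^\delta v \in H_0^2(\Omega)$; after adding and subtracting $(Bu)(E^\delta v)$ and $\ell(E^\delta v)$ this gives
\begin{align*}
(B^\delta z - \ell^\delta)(v) = {} & \big[(B^\delta z)(v - E^\delta v) + \ell(E^\delta v) - \ell^\delta(v)\big] \\
& + (B(z-u))(E^\delta v) + (Bu - \ell)(E^\delta v).
\end{align*}
By \eqref{eq:12} and \eqref{RSeq:2}, $\|\triangle E^\delta v\|_{L_2(\Omega)} \lesssim \|v\|_{Y^\delta}$, so the last two terms contribute $\lesssim \bigl(\|u-z\|_{L_2(\Omega)} + \|Bu - \ell\|_{H^{-2}(\Omega)}\bigr)\|v\|_{Y^\delta}$, while the bracketed terms are exactly those tracked inside the minimum in \eqref{eq:23}. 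Combining this with the straightforward splittings $\|Cz-q\|_{L_2(\omega)} \leq \|u-z\|_{L_2(\Omega)} + \|Cu-q\|_{L_2(\omega)}$ and $\eps\|z\|_{L_2(\Omega)} \leq \eps\|u\|_{L_2(\Omega)} + \|u-z\|_{L_2(\Omega)}$ (where the latter uses $\eps \lesssim 1$) yields the required bound on $J(z)^{1/2}$; taking the infimum over $z \in X^\delta$ then proves \eqref{eq:23}.

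The ``consequently'' assertion will follow immediately by substituting \eqref{eq:22} into \eqref{eq:23}, since the $\|u-z\|_{L_2(\Omega)}$ and $\|\ell - Bu\|_{H^{-2}(\Omega)}$ contributions on the right-hand side of \eqref{eq:22} are absorbed into $\cE_{\rm appr}(\delta)$ and $\cE_{\rm data}$, respectively, after taking the minimum. I expect the main conceptual step to be the companion-operator splitting in the second paragraph: isolating the genuinely nonconforming error $(B^\delta z)(v - E^\delta v)$ and the data-approximation error $\ell(E^\delta v) - \ell^\delta(v)$ into a single dual norm on $Y^\delta$, so that the residual $B^\delta z - \ell^\delta$ separates cleanly into terms controlled by $\cE_{\rm data}$, by the best approximation error, and by this joint oscillation quantity. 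The rest of the argument is a sequence of triangle estimates; the Fortin hypothesis \eqref{eq:13} enters only indirectly, via \eqref{eq:10} and via the well-posedness supplied by Lemma~\ref{lem:1}.
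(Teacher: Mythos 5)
Your proof is correct. The decisive technical step --- splitting $(B^\delta z-\ell^\delta)(v)$ via the companion operator into the nonconforming/data-consistency bracket, the term $(B(z-u))(E^\delta v)$ controlled by $\|u-z\|_{L_2(\Omega)}$, and the term $(Bu-\ell)(E^\delta v)$ controlled by $\cE_{\rm data}$ through \eqref{eq:12} and \eqref{RSeq:2} --- is exactly the identity \eqref{eq:24} in the paper's proof, and your triangle-inequality treatment of the $C$- and $\eps$-terms and of the ``consequently'' assertion likewise matches. Where you differ is in how $\nrm z-u_\eps^\delta\nrm_{\eps,\delta}$ is bounded: the paper invokes Lemma~\ref{lem:1}, i.e.\ the uniform inf-sup stability of the saddle-point operator $M^\delta$, writes $M^\delta(z-u_\eps^\delta,-v_\eps^\delta)=M^\delta(z,0)-f^\delta$ using the Galerkin equations \eqref{eq:saddle2}, and estimates the resulting residual functional; you instead use the Pythagoras identity $J(z)=J(u_\eps^\delta)+\nrm z-u_\eps^\delta\nrm_{\eps,\delta}^2$ that follows from first-order optimality of the minimizer over the linear space $X^\delta$, so that $\nrm z-u_\eps^\delta\nrm_{\eps,\delta}\le J(z)^{1/2}$. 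The two mechanisms deliver the same bound (the dual norm of the paper's residual is, up to constants, $J(z)^{1/2}$), but yours is more elementary: it dispenses with the LBB argument of Lemma~\ref{lem:1} for the error estimate itself, needing well-posedness only to know that the minimizer exists and is computed by \eqref{eq:saddle2} (for $\eps=0$ this rests on $\nrm\bigcdot\nrm_{0,\delta}$ being a norm on $X^\delta$, which follows from \eqref{eq:10} and the injectivity of $A$, as you implicitly use). What the paper's route buys in exchange is simultaneous control of the auxiliary variable $v_\eps^\delta$ in $\|\bigcdot\|_{Y^\delta}$, which the purely primal Pythagoras argument does not provide.
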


\begin{proof}
For arbitrary $z \in X^\delta$, using $\nrm\bigcdot\nrm_\eps \lesssim \|\bigcdot\|_{L_2(\Omega)}$ and \eqref{eq:10}, we estimate
$$
\nrm u-u_\eps^\delta\nrm_\eps \leq \nrm u-z\nrm_\eps +\nrm z-u_\eps^\delta\nrm_\eps \lesssim \| u-z\|_{L_2(\Omega)} +\nrm z-u_\eps^\delta\nrm_{\eps,\delta}.
$$
With $v^\delta_\eps$ being the second component of the solution of \eqref{eq:saddle2}, an application of Lemma~\ref{lem:1} shows that
$$
\nrm z-u_\eps^\delta\nrm_{\eps,\delta} \leq \nrm z-u_\eps^\delta\nrm_{\eps,\delta} +\|-v_\eps^\delta\|_{Y^\delta} \eqsim
\sup_{0 \neq (\tilde{z},\tilde{v}) \in X^\delta \times Y^\delta}
\frac{|M^\delta(z-u_\eps^\delta,-v_\eps^\delta)(\tilde{z},\tilde{v})| }
{\sqrt{\nrm \tilde{z}\nrm_{\eps,\delta}^2+\|\tilde{v}\|^2_{Y^\delta}}}.
$$
We estimate
\begin{align*}
& |
M^\delta(z-u_\eps^\delta,-v_\eps^\delta)(\tilde{z},\tilde{v})
|=|M^\delta(z,0)(\tilde{z},\tilde{v})
-\ell^\delta(\tilde{v})+\langle q,C \tilde{z}\rangle_{L_2(\omega)}
|\\
& \leq |(B^\delta z-\ell^\delta)(\tilde{v})|+
|\langle q,C \tilde{z}\rangle_{L_2(\omega)}-\langle C z,C \tilde{z}\rangle_{L_2(\omega)}-\eps^2 \langle z,\tilde{z}\rangle_{L_2(\Omega)}|.
\end{align*}
We bound the second term in this upper bound as follows
\begin{align*}
&|\langle q-Cz,C \tilde{z}\rangle_{L_2(\omega)}-\eps^2 \langle z,\tilde{z}\rangle_{L_2(\Omega)}| \\
&\leq 
(\|q-Cu\|_{L_2(\omega)}+\|C (u-z)\|_{L_2(\omega)})\|C\tilde{z}\|_{L_2(\omega)}+\\
& \hspace*{15em}(\eps\|u-z\|_{L_2(\Omega)}+\eps\|u\|_{L_2(\Omega)})\eps\|\tilde{z}\|_{L_2(\Omega)}\\
&\lesssim 
\big(\|q-Cu\|_{L_2(\omega)}+ \|u-z\|_{L_2(\Omega)}  +\eps\|u\|_{L_2(\Omega)}\big)\nrm \tilde{z}\nrm_{\eps,\delta}.
\end{align*}
Using \eqref{eq:15} we write the first term as
\be \label{eq:24}
(B^\delta z-\ell^\delta)(\tilde{v})=
(B^\delta z)(\tilde{v}-E^\delta\tilde{v})+\ell(E^\delta\tilde{v})-\ell^\delta(\tilde{v})+
(B z)(E^\delta \tilde{v})-\ell(E^\delta \tilde{v}),
\ee
and, using \eqref{eq:12}, estimate
\begin{align*}
|(B z)(E^\delta \tilde{v})-\ell(E^\delta \tilde{v})| &\lesssim \|B z -\ell\|_{H^{-2}(\Omega)} \| \tilde{v}\|_{Y^\delta}\\
&\lesssim (\|u-z\|_{L_2(\Omega)}+\|B u-\ell\|_{H^{-2}(\Omega)} )\| \tilde{v}\|_{Y^\delta},
\end{align*}
and
\begin{align*}
|(B^\delta z)(\tilde{v}-E^\delta\tilde{v})+&\ell(E^\delta\tilde{v})-\ell^\delta(\tilde{v})|\\
&\leq \sup_{0 \neq v \in Y^\delta}\frac{|(B^\delta z)(v-E^\delta v)+\ell(E^\delta v)-\ell^\delta(v)}{\|v\|_{Y^\delta}} \|\tilde{v}\|_{Y^\delta}.
\end{align*}

By collecting above estimates, and by minimizing over $z \in X^\delta$, the proof of \eqref{eq:23} is completed.
\end{proof}

For completeness we state the following analogue of Corollary~\ref{RScorol:1}.

\begin{corollary} \label{corol:2} In the setting of  Theorem~\ref{thm:lsnonconf},  let $\eps>0$ be such that
$$
\eps \|u\|_{L_2(\Omega)} \eqsim \cE_{\rm data}+\cE_{\rm appr}(\delta)+\osc_\delta(\ell).
$$
Then with $\alpha \in (0,1]$ from Theorem~\ref{RSprop:1}, it holds that
\be \nonumber
\begin{split}
\|u-u_\eps^\delta\|_{L_2(G)}  \lesssim 
\big( \cE_{\rm data}+&\cE_{\rm appr}(\delta)+\osc_\delta(\ell)\big)^\alpha \\
&\times \max\big(\cE_{\rm data}+\cE_{\rm appr}(\delta)+\osc_\delta(\ell),\|u\|_{L_2(\Omega)}\big)^{1-\alpha}.
\end{split}
\ee
\end{corollary}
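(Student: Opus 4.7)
The plan is to mimic the proof of Corollary~\ref{RScorol:1} almost verbatim, with $\osc_\delta(\ell)$ incorporated into the data term. The essential point is that the simplified conclusion of Theorem~\ref{thm:lsnonconf} (that is, the bound obtained under assumption \eqref{eq:22}) has exactly the same structure as \eqref{eq:14}, except that the right-hand side now carries the extra summand $\osc_\delta(\ell)$.

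First, I would substitute the calibration $\eps \|u\|_{L_2(\Omega)} \eqsim \cE_{\rm data}+\cE_{\rm appr}(\delta)+\osc_\delta(\ell)$ into the conclusion of Theorem~\ref{thm:lsnonconf}. Splitting the resulting inequality into its two components (dividing the $\|A(u-u_\eps^\delta)\|$-part by $1$ and the $\eps\|u-u_\eps^\delta\|_{L_2(\Omega)}$-part by $\eps$) yields the two a priori bounds
\begin{equation*}
\left\{
\begin{array}{rcl}
\|u-u_\eps^\delta\|_{L_2(\Omega)} & \lesssim & \|u\|_{L_2(\Omega)},\\
\|A(u-u_\eps^\delta)\|_{H^{-2}(\Omega)\times L_2(\omega)} & \lesssim & \cE_{\rm data}+\cE_{\rm appr}(\delta)+\osc_\delta(\ell).
\end{array}
\right.
\end{equation*}

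Next, I would apply the conditional stability estimate of Theorem~\ref{RSprop:2} to $z := u-u_\eps^\delta \in L_2(\Omega)$. Plugging the two displayed bounds into
$$
\|z\|_{L_2(G)} \lesssim \|Az\|_{H^{-2}(\Omega)\times L_2(\omega)}^\alpha \big(\|z\|_{L_2(\Omega)}+\|Az\|_{H^{-2}(\Omega)\times L_2(\omega)}\big)^{1-\alpha}
$$
gives exactly the claimed inequality, since the factor $\|z\|_{L_2(\Omega)}+\|Az\|_{H^{-2}(\Omega)\times L_2(\omega)}$ is, up to constants, bounded by $\max(\|u\|_{L_2(\Omega)},\,\cE_{\rm data}+\cE_{\rm appr}(\delta)+\osc_\delta(\ell))$.

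There is really no obstacle; the only thing to double-check is that the conditional stability result of Theorem~\ref{RSprop:2} is applied to the \emph{exact} operator $A$, not to its discrete surrogate $(B^\delta, C)$. This is why Theorem~\ref{thm:lsnonconf} is formulated with the first estimate phrased in terms of $\|A(u-u_\eps^\delta)\|_{H^{-2}(\Omega)\times L_2(\omega)}$ rather than $\|B^\delta(u-u_\eps^\delta)\|_{(Y^\delta)'}$, which makes the use of Theorem~\ref{RSprop:2} immediate in the last step. Hence the proof reduces to a one-line substitution followed by one application of Theorem~\ref{RSprop:2}, entirely parallel to Corollary~\ref{RScorol:1}.
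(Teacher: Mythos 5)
Your proposal is correct and follows exactly the route the paper intends: the paper states this corollary without proof as the ``analogue of Corollary~\ref{RScorol:1}'', and its implicit argument is precisely your substitution of the calibration of $\eps$ into the conclusion of Theorem~\ref{thm:lsnonconf} followed by one application of the conditional stability estimate of Theorem~\ref{RSprop:2} to $u-u_\eps^\delta$. Your remark that the stability estimate must be applied to the exact operator $A$ rather than the discrete surrogate is also the correct reading of why Theorem~\ref{thm:lsnonconf} is phrased as it is.
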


\section{Piecewise constant trial spaces with Morley test spaces} \label{sec:6}
Let $\tria^\delta$, $\cN^\delta$, $\cN_\circ^\delta$, $\cE^\delta$, and $\cE_\circ^\delta$ be as in Sect.~\ref{sec:4}.
Let $X^\delta$ be the space of \emph{piecewise constants} w.r.t.~$\tria^\delta$. Let $Y^\delta$ be the \emph{Morley} finite element space w.r.t.~$\tria^\delta$, being the space of piecewise quadratic polynomials that are continuous at $\cN_\circ^\delta$, and vanish at $\cN^\delta \setminus \cN_\circ^\delta$, and whose normal derivatives are continuous at midpoints 
of $\cE_\circ^\delta$, and vanish at midpoints of $\cE^\delta \setminus \cE_\circ^\delta$, see Figure~\ref{fig:2}.
\begin{figure}[h]
\includegraphics[width=4cm]{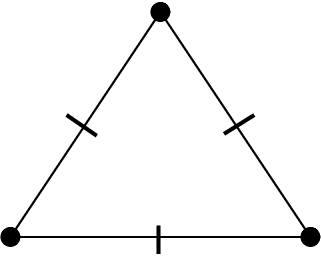}
\caption{\label{fig:2} Morley element and its 6 DoF.}
\end{figure}
We equip $Y^\delta$ with the Hilbertian norm
$$
\|\bigcdot\|_{Y^\delta}:=\sqrt{\sum_{K \in \tria^\delta}|\bigcdot|_{H^2(K)}^2},
$$
which is uniformly equivalent to $\sqrt{\sum_{K \in \tria^\delta}\|\bigcdot\|_{H^2(K)}^2}$ (see \cite[Lem.~8]{310.65}), and define
$$
(B^\delta z)(v):=-\sum_{K \in \tria^\delta} \int_K z \triangle v\,dx,
$$
which satisfies \eqref{eq:15}.

We set the operator $Q^\delta \in \cL(H^2_0(\Omega),Y^\delta)$ by
\be \label{eq:weerFortin}
(Q^\delta v)(\nu)=v(\nu)\quad (\nu \in \cN_\circ^\delta), \quad
\int_e \partial_{n_e} Q^\delta v \,ds=\int_e \partial_{n_e} v\,ds \quad (e \in \cE_\circ^\delta)
\ee
(notice that $\int_e \partial_{n_e} Q^\delta v \,ds=h_e (\partial_{n_e} Q^\delta v)(m_e)$ is single-valued).
It is known, e.g.~see \cite{37.474}, that
$$
\|v -Q^\delta v\|_{H^k(K)} \lesssim h_K^{2-k} |v|_{H^2(K)} \quad (0 \leq k \leq 2,\,v \in H^2(K))
$$
For $z \in X^\delta$, $v \in H^2_0(\Omega)$, integration-by-parts shows that
\begin{align*}
(Bz)(v)=-\sum_{K \in \tria^\delta} \int_K z \triangle v\,dx& =
-\sum_{e \in \cE^\delta_\circ} \int_e \llbracket z \rrbracket_e \partial_{n_e} v\,ds=
-\sum_{e \in \cE^\delta_\circ} \int_e \llbracket z \rrbracket_e \partial_{n_e} Q^\delta v\,ds\\
&=
-\sum_{K \in \tria^\delta} \int_K z \triangle Q^\delta v\,dx=(B^\delta z)(Q^\delta v),
\end{align*}
so that $Q^\delta$ is a valid  (generalized) Fortin operator, i.e., it satisfies \eqref{eq:13}.
\medskip

Following \cite{306.52}, we define a map $\widetilde{E}^\delta \in \cL(Y^\delta,H^2_0(\Omega))$ into the HCT finite element space w.r.t.~$\tria^\delta$ from \eqref{eq:HCT} as follows.
Recall that a HCT function is determined by its function values at $\nu \in \cN_\circ^\delta$, its normal derivatives at midpoints $m_e$ of $e \in \cE_\circ^\delta$, and its gradients at $\nu \in \cN_\circ^\delta$. The first two types of DoF are shared by the Morley space $Y^\delta$,
and for $v \in Y^\delta$ we set $(\widetilde{E}^\delta v)(\nu):=v(\nu)$ ($\nu \in \cN_\circ^\delta$), and $(\partial_{n_e} \widetilde{E}^\delta v)(m_e):=(\partial_{n_e} v)(m_e)$ ($e \in \cE_\circ^\delta$). For $\nu \in \cN_\circ^\delta$, we select \emph{some} $\tria^\delta \ni K \ni \nu$, and set
$(\nabla \widetilde{E}^\delta v)(\nu):=(\nabla v|_{K})(\nu)$.

As follows from \cite{306.52}, it holds that 
\be \label{eq:29}
h_K^{-4}\|v-\widetilde{E}^\delta v\|_{L_2(K)}^2+h_K^{-2}|v-\widetilde{E}^\delta v|_{H^1(K)}^2+|\widetilde{E}^\delta v|_{H^2(K)}^2
 \lesssim \sum_{K' \in \omega(K)} |v|_{H^2(K')}^2
 \ee
 ($v \in Y^\delta,\,K \in \tria^\delta$). Actually, in \cite[(proof of) Prop.~3.17]{306.52} only 
$ |v-\widetilde{E}^\delta v|_{H^2(K)}^2
 \lesssim \sum_{K' \in \omega(K)} |v|_{H^2(K')}^2$ was shown, but the remaining parts from \eqref{eq:29} follow from \cite[Lemma~3.15]{306.52}, and obvious bounds for the $\|\bigcdot\|_{L_2(\Omega)}$ and $|\bigcdot|_{H^1(\Omega)}$ (semi-) norms of nodal basis functions of the HCT space that supplement such bounds in $|\bigcdot|_{H^2(\Omega)}$ semi-norm from \cite[Lemma 3.14]{306.52}.

To construct a companion operator $E^\delta$ that has the \emph{additional property} to be a right-inverse of the Fortin interpolator, we modify $\widetilde{E}^\delta$. As in Sect.~\ref{sec:4}, for $e \in \cE_\circ^\delta$ let $b_e$ be the nodal basis function from the HTC space that corresponds to the DoF associated to the midpoint $m_e$ of $e$.
For $v \in Y^\delta$, we set $E^\delta \in \cL(Y^\delta,H^2_0(\Omega)$ by
$$
E^\delta v:=\widetilde{E}^\delta v+\sum_{e \in \cE_\circ^\delta} \frac{\int_e \partial_{n_e}(\identity-\widetilde{E}^\delta)v\,ds}{\int_e \partial_{n_e} b_e\,ds} b_e.
$$
From $|b_e|_{H^k(\Omega)} \eqsim h_e^{2-k}$ ($0 \leq k \leq 2$), $\int_e \partial_{n_e} b_e \,ds=\frac23 h_e$,
$$
|\int_e \partial_{n_e}(\identity-E^\delta)v\,ds| \lesssim
\|(\identity-\widetilde{E}^\delta)v\|_{H^1(K)}+h_e|(\identity-\widetilde{E}^\delta)v|_{H^2(K)},
$$
 where $\tria^\delta \ni K \supset e$, and \eqref{eq:29}, we infer that
\be \label{eq:17}
h_K^{-4}\|v-E^\delta v\|_{L_2(K)}^2+h_K^{-2}|v-E^\delta v|_{H^1(K)}^2+|E^\delta v|_{H^2(K)}^2
 \lesssim \sum_{K' \in \omega(K)} |v|_{H^2(K')}^2
\ee
  ($v \in Y^\delta,\,K \in \tria^\delta$), so that in particular $E^\delta$ satisfies \eqref{eq:12}.
  
 Moreover, by construction it holds that
  $$
  \int_e \partial_{n_e} E^\delta v\, ds= \int_e \partial_{n_e} v\, ds \quad(e \in \cE^\delta,\,v \in Y^\delta),
  $$
  and because $E^\delta$ reproduces function values at the nodal points, from \eqref{eq:weerFortin} we conclude that indeed
 \be \label{eq:28}
  Q^\delta E^\delta=\identity.
 \ee
 
 Finally, notice that the values of DoF associated to $\nu \in \cN^\delta_\circ$ of $E^\delta v$ are equal to those of $\widetilde{E}^\delta v$, which will be relevant in Sect.~\ref{sec:7}.
\begin{remark}
 Other than in e.g.~\cite{75.265, 306.52,37.475}, we have \emph{not} extended the HCT space with additional higher order polynomial bubble functions to ensure \eqref{eq:28}.
 \end{remark}
 
 We have established proporties \eqref{eq:12}--\eqref{eq:13}. For the application of Theorem~\ref{thm:lsnonconf} and Corollary~\ref{corol:2} it remains to specify $\ell^\delta \in (Y^\delta)'$ ($\delta \in \Delta$) which approximate $\ell \in Y'$, and to give a corresponding definition of $\osc_\delta(\ell)$ that verifies \eqref{eq:22}. We consider \emph{two options} for $(\ell^\delta)_{\delta \in I}$. As in \cite{35.68,306.51,37.476} for elliptic problems, with the first option the nonconforming test functions are smoothed before submitted to $\ell \in H^{-2}(\Omega)$.
 
 \begin{theorem}  \label{thm:1} Let $(\ell,q) \in H^{-2}(\Omega) \times L_2(\omega)$, and let $(X^\delta,Y^\delta)_{\delta \in I}$ be as specified above.
Define $\ell^\delta \in (Y^\delta)'$ by
$$
\ell^\delta(v):=\ell(E^\delta v).
$$
Then \eqref{eq:22} is valid with $\osc_\delta(\bigcdot)=0$.
 \end{theorem}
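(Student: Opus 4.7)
The plan is to observe that with the smoothed data $\ell^\delta(v):=\ell(E^\delta v)$, the entire numerator inside the supremum in \eqref{eq:22} vanishes identically, so that \eqref{eq:22} holds trivially with $\osc_\delta(\cdot)=0$.

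First, by the very definition of $\ell^\delta$, the two rightmost terms cancel: $\ell(E^\delta v)-\ell^\delta(v)=0$ for every $v\in Y^\delta$. It therefore only remains to show that $(B^\delta z)(v-E^\delta v)=0$ for every $z\in X^\delta$ and $v\in Y^\delta$.

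For this I would exploit the crucial compatibility \eqref{eq:28}, $Q^\delta E^\delta=\identity$, between the companion operator $E^\delta$ and the Fortin operator $Q^\delta$ (this is precisely what motivated the correction of $\widetilde{E}^\delta$ via the nodal bubbles $b_e$ earlier in the section). Since $E^\delta v\in H^2_0(\Omega)$, applying $(\identity-Q^\delta)$ to $E^\delta v$ gives
\[
(\identity-Q^\delta)E^\delta v \;=\; E^\delta v - v,
\]
so $v-E^\delta v \in \ran(\identity-Q^\delta)$. The Fortin property \eqref{eq:13}, $(B^\delta X^\delta)\bigl(\ran(\identity-Q^\delta)\bigr)=0$, then yields $(B^\delta z)(v-E^\delta v)=0$ for every $z\in X^\delta$.

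Combining these two observations gives that the supremum on the left-hand side of \eqref{eq:22} is identically zero, which is obviously bounded by $\|\ell-Bu\|_{H^{-2}(\Omega)}+\|u-z\|_{L_2(\Omega)}+\osc_\delta(\ell)$ with $\osc_\delta(\cdot)=0$. There is essentially no obstacle in the proof itself; the substantive work has already been done in constructing a companion operator $E^\delta$ that is simultaneously $H^2_0(\Omega)$-bounded (property \eqref{eq:12}) and a right-inverse of $Q^\delta$ (property \eqref{eq:28}), and the present theorem simply harvests the consequence.
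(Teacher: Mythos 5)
Your proof is correct and is essentially the paper's own argument: the paper likewise writes $(B^\delta z)(v-E^\delta v)=(B^\delta z)(v)-(B^\delta z)(Q^\delta E^\delta v)=0$ using \eqref{eq:13} and \eqref{eq:28}, and notes $\ell(E^\delta v)-\ell^\delta(v)=0$ by definition. Your phrasing via $v-E^\delta v\in\ran(\identity-Q^\delta)$ is just a minor rearrangement of the same two facts.
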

 
  \begin{proof} Thanks to \eqref{eq:13} and \eqref{eq:28},  $(z,v) \in X^\delta\times Y^\delta$ it holds that
$$
 (B^\delta z)(v-E^\delta v)=(B^\delta z)(v)-(B^\delta z)(Q^\delta E^\delta v)=0,
$$
 and $\ell(E^\delta v)-\ell^\delta(v)=0$. 
  \end{proof}
  
 With above definition of $\ell^\delta$, the bounds from Theorem~\ref{thm:lsnonconf} and Corollary~\ref{corol:2}  are \emph{qualitatively equal} to those from 
 Theorem~\ref{thm:ls} and Corollary~\ref{RScorol:1}  for conforming test functions.
 A disadvantage, however, is that, although not in the system matrix, the `complicated' HTC finite element space does enter the evaluation of the forcing term.

 At the price of restricting to $\ell \in L_2(\Omega)$, and getting a non-zero oscillation term in the upper bound (which is, however, of higher order than $\min_{z \in X^\delta} \|u-z\|_{L_2(\Omega)}$), this disadvantage disappears by not smoothing the nonconforming test functions, being our second option:

 \begin{theorem} \label{thm:theorempje} Let $(\ell,q) \in L_2(\Omega) \times L_2(\omega)$, $u \in L_2(\Omega)$, and let $(X^\delta,Y^\delta)_{\delta \in I}$ be as specified above. Take
 $$
 \ell^\delta:=\ell.
 $$
 For arbitrary, but fixed $r \in \N$, let
$$
\osc_\delta(\ell):=\sqrt{\sum_{T \in \tria^\delta}\min_{\widehat{\ell} \in \cP_r(K)} h_K^4\|\ell-\widehat{\ell}\|_{L_2(K)}^2}. 
$$
Then
$$
\sup_{0 \neq v \in Y^\delta}\frac{|(B^\delta z)(v-E^\delta v)+\ell(E^\delta v-v)|}{\|v\|_{Y^\delta}} \lesssim
\|\ell-Bu\|_{H^{-2}(\Omega)}+\| u-z\|_{L_2(\Omega)}+\osc_\delta(\ell).
$$
\end{theorem}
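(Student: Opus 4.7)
The plan is to reduce the bound to a cleaner object via the Fortin identity, then split the data in the usual oscillation-extraction way, and finally perform a medius-style analysis on the polynomial remainder. I expect the third step to be the substantive one.

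\textbf{Step 1 (Fortin cancellation).} Exactly as in the proof of Theorem~\ref{thm:1}, the Fortin property \eqref{eq:13} applied to $w = E^\delta v \in H^2_0(\Omega)$ combined with $Q^\delta E^\delta = \identity$ from \eqref{eq:28} gives $(B^\delta z)(E^\delta v) = (B^\delta z)(Q^\delta E^\delta v) = (B^\delta z)(v)$, whence $(B^\delta z)(v-E^\delta v) = 0$. So the supremum we must control equals $|\ell(E^\delta v-v)|/\|v\|_{Y^\delta}$, and since $\ell\in L_2(\Omega)$ this is $|\int_\Omega \ell\,(E^\delta v-v)\,dx|/\|v\|_{Y^\delta}$.

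\textbf{Step 2 (extracting data oscillation).} On each $K\in\tria^\delta$ let $\hat\ell_K\in\cP_r(K)$ realise the minimum in $\osc_\delta(\ell)$, and let $\hat\ell$ be the corresponding piecewise polynomial. Split
$$
\int_\Omega \ell\,(E^\delta v-v)\,dx
=\int_\Omega(\ell-\hat\ell)(E^\delta v-v)\,dx+\int_\Omega\hat\ell\,(E^\delta v-v)\,dx.
$$
The first summand is bounded by elementwise Cauchy--Schwarz combined with the $L_2$-estimate $\|E^\delta v-v\|_{L_2(K)}\lesssim h_K^2\|v\|_{H^2(\omega(K))}$ from \eqref{eq:17} and a further Cauchy--Schwarz in $K$, giving a contribution $\lesssim\osc_\delta(\ell)\|v\|_{Y^\delta}$ independently of $u,z$.

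\textbf{Step 3 (medius argument for the polynomial remainder).} For the second summand I would bring in $u$ and $z$ as follows. Use $\int_\Omega\hat\ell\,E^\delta v\,dx=\int_\Omega\ell\,E^\delta v\,dx-\int_\Omega(\ell-\hat\ell)E^\delta v\,dx$; the term $\int_\Omega\ell\,E^\delta v\,dx=\ell(E^\delta v)=(Bu)(E^\delta v)+(\ell-Bu)(E^\delta v)$ is, via $u=z+(u-z)$ and the Fortin identity $(B^\delta z)(E^\delta v)=(B^\delta z)(v)$ from Step 1, rewritten as $(B^\delta z)(v)-\int_\Omega(u-z)\triangle E^\delta v\,dx+(\ell-Bu)(E^\delta v)$, with the last two pieces bounded by $(\|u-z\|_{L_2}+\|\ell-Bu\|_{H^{-2}})\|v\|_{Y^\delta}$ using $\|\triangle E^\delta v\|_{L_2}\lesssim\|v\|_{Y^\delta}$ from \eqref{eq:12}. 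The truly new object is $\int_\Omega\hat\ell\,v\,dx$, which I would treat by element-local integration-by-parts: pick $\psi_K\in\cP_{r+2}(K)$ with $-\triangle\psi_K=\hat\ell_K$, normalised (modulo harmonic polynomials) so that $\|\psi_K-\overline{\psi_K}\|_{L_2(K)}\lesssim h_K^2\|\hat\ell_K\|_{L_2(K)}$, and apply Green's identity twice. The resulting bulk term $-\int_K\psi_K\triangle(E^\delta v-v)\,dx$ is handled using the moment condition $\int_K\triangle(E^\delta v-v)\,dx=0$ (which follows from the Fortin property with piecewise-constant tests $z=\chi_K\in X^\delta$) to annihilate the constant part of $\psi_K$, leaving an $h_K$-small remainder that, paired with $\hat\ell_K$, is again absorbed into an $\osc_\delta(\ell)$-type contribution. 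Summing over $K$ turns the boundary terms into edge integrals; on interior edges these are controlled by the Morley moment conditions $\int_e\llbracket\partial_{n_e}v\rrbracket\,ds=0$ and continuity of $E^\delta v$ across $e$ (since $E^\delta v\in H^2_0$), and on boundary edges by the Morley boundary conditions inherited by $E^\delta v$.

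\textbf{Main obstacle.} The non-trivial part is Step 3: organising the element-local IBP so that the edge jumps produced by the nonconformity of $v$ match exactly the quantities that either vanish by the Morley DoF conditions or can be reabsorbed into $\osc_\delta(\ell)$, while simultaneously ensuring that the only $u$- and $z$-dependence enters through $(\ell-Bu)(E^\delta v)$ and $\int(u-z)\triangle E^\delta v\,dx$. Getting the correct $h_K$-scaling on the polynomial potential $\psi_K$ (so that its oscillation against $\triangle(E^\delta v-v)$ is controlled by $h_K^2\|\hat\ell_K\|_{L_2(K)}\|v\|_{H^2(\omega(K))}$, absorbable into $\osc_\delta(\ell)\|v\|_{Y^\delta}$) is the technical heart.
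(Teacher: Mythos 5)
Your Step~1 (Fortin cancellation via $Q^\delta E^\delta=\identity$) and Step~2 (splitting off $\int_\Omega(\ell-\widehat\ell)(E^\delta v-v)\,dx$ and absorbing it into $\osc_\delta(\ell)\,\|v\|_{Y^\delta}$ via the $L_2$-estimate in \eqref{eq:17}) are correct and consistent with the paper. The gap is in Step~3, in two respects. First, the algebra does not close: after rewriting $\int_\Omega\widehat\ell\,E^\delta v\,dx$ you are left with the terms $(B^\delta z)(v)$ and $-\int_\Omega(\ell-\widehat\ell)\,E^\delta v\,dx$, neither of which is controlled by the right-hand side of the theorem. The former is the full bilinear form, of size $\|z\|_{L_2(\Omega)}\|v\|_{Y^\delta}$, and nothing elsewhere in your decomposition cancels it; the latter pairs $\ell-\widehat\ell$ with $E^\delta v$ rather than with $E^\delta v-v$, so the crucial factor $h_K^2$ from \eqref{eq:17} is unavailable and the term is not an oscillation. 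Second, the ``truly new object'' is exactly where the proof has to happen, and your treatment of it remains a sketch: the polynomial potential $\psi_K$ is not in $H^2_0(K)$, so the element-local Green identities generate edge integrals of $\psi_K\,\llbracket\partial_{n_e} v\rrbracket$ and $\partial_{n_e}\psi_K\,\llbracket v\rrbracket$ against non-constant polynomial weights, which the Morley moment conditions (zero mean of the normal-derivative jump, continuity only at vertices) do not annihilate; turning these into $\osc_\delta(\ell)$-type quantities is a full Strang-type consistency estimate that you have not carried out, and it is not needed.

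The paper avoids all of this with the Verf\"urth bubble-function technique. The whole of $|\ell(E^\delta v-v)|$ is first crudely bounded by $\bigl(\sum_{K}h_K^4\|\ell\|_{L_2(K)}^2\bigr)^{1/2}\|v\|_{Y^\delta}$ using \eqref{eq:17}; then $h_K^2\|\ell\|_{L_2(K)}$ is estimated elementwise by $h_K^2\|\ell-\widehat\ell\|_{L_2(K)}+\|u-z\|_{L_2(K)}+\|\ell-Bu\|_{H^{-2}(K)}$ by means of a bubble $\phi_K\in H^2_0(K)$ with $\|\widehat\ell\|_{L_2(K)}^2\lesssim\int_K\widehat\ell\,\phi_K\,dx$ and $\|\phi_K\|_{L_2(K)}\eqsim h_K^2|\phi_K|_{H^2(K)}$. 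Because $\phi_K\in H^2_0(K)$ and $z$ is constant on $K$, one has $\int_K z\triangle\phi_K\,dx=0$, no boundary terms ever arise, and the scaling of $\phi_K$ supplies exactly the $h_K^{-2}$ factors needed to bring in $\|u-z\|_{L_2(K)}$ and $\|\ell-Bu\|_{H^{-2}(K)}$ (the latter after summing with $\sum_K\|\cdot\|_{H^{-2}(K)}^2\lesssim\|\cdot\|_{H^{-2}(\Omega)}^2$). This localized $H^2_0$ bubble, replacing your polynomial potential, is the idea missing from your Step~3.
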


\begin{proof} Because for $(z,v) \in X^\delta\times Y^\delta$, we have $(B^\delta z)(v-E^\delta v)=0$, it remains to verify
$|\ell(E^\delta v-v)| \lesssim \big(\|\ell-Bu\|_{H^{-2}(\Omega)}+\|u-z\|_{L_2(\Omega)}+\osc_\delta(\ell)\big)\|v\|_{Y^\delta}$.

For $v \in Y^\delta$, using \eqref{eq:17} we have
\be \label{eq:18}
\begin{split}
&|\ell(E^\delta v-v)| =|\sum_{K \in \tria^\delta} \int_K \ell (E^\delta v-v)\,dx|\\
& \leq \sum_{K \in \tria^\delta} h_K^2\|\ell\|_{L_2(K)} h_K^{-2} \|E^\delta v-v\|_{L_2(K)}  \lesssim \sqrt{\sum_{K \in \tria^\delta} h_K^4\|\ell\|^2_{L_2(K)}} \|v\|_{Y^\delta}.
\end{split}
\ee

Following \cite{77.4}, to proceed we use the bubble function technique (\cite{307}).
For $K \in \tria^\delta$ and $\widehat{\ell} \in \cP_r(K)$, there exists a `bubble' $\phi=\phi_K \in H^2_0(K)$ with $\|\phi \|_{L_2(K)} \eqsim \|\widehat{\ell}\|_{L_2(K)}$, $ \|\widehat{\ell}\|_{L_2(K)}^2 \lesssim \int_K \widehat{\ell} \phi\,dx$, and $\|\phi\|_{L_2(K)} \eqsim h_K^2 |\phi|_{H^2(K)}$.
For $z \in X^\delta$, using that $\int_K z \triangle \phi \,dx=0$ we estimate
\begin{align*}
\|\widehat{\ell}\|_{L_2(K)}^2 &\lesssim \int_K \widehat{\ell} \phi\,dx= \int_K (\widehat{\ell} -\ell)\phi\,dx+ \int_K \ell \phi\,dx\\
&= \int_K (\widehat{\ell} -\ell)\phi\,dx+\int_K(z-u) \triangle \phi\,dx+\int_K \ell \phi+u \triangle \phi\,dx\\
&\lesssim \big(\|\widehat{\ell} -\ell\|_{L_2(K)}+h_K^{-2} \|u-z\|_{L_2(K)} + h_K^{-2}\|\ell-Bu\|_{H^{-2}(K)}\big)\|\widehat{\ell}\|_{L_2(K)},
\end{align*}
and so
\begin{align*}
\|\ell\|_{L_2(K)}&\leq \|\ell-\widehat{\ell}\|_{L_2(K)}+\|\widehat{\ell}\|_{L_2(K)} \\
&\leq \|\widehat{\ell} -\ell\|_{L_2(K)}+h_K^{-2} \|u-z\|_{L_2(K)} + h_K^{-2}\|\ell-Bu\|_{H^{-2}(K)}.
\end{align*}
or
\be \label{eq:20}
\sum_{K \in \tria^\delta} h_K^4\|\ell\|^2_{L_2(K)} \lesssim \osc_\delta(\ell)^2+\|u-z\|_{L_2(\Omega)}^2+\|\ell-Bu\|_{H^{-2}(\Omega)}^2,
\ee
where we have used that $\sum_{K \in \tria^\delta} \|\bigcdot\|_{H^{-2}(K)}^2 \lesssim \|\bigcdot\|_{H^{-2}(\Omega)}^2$.
Indeed, given $f \in H^{-2}(\Omega)$, take $\phi_K \in H^2_0(K)$ with $\|f\|_{H^{-2}(K)} \eqsim \frac{f(\phi_K)}{\|\phi_K\|_{H^2(\Omega)}}$ and $\|\phi_K\|_{H^2(K)}=\|f\|_{H^{-2}(K)}$. Then $\sum_{K \in \tria^\delta}\|f\|_{H^{-2}(K)}^2 \eqsim \frac{(\sum_K f(\phi_K))^2}{\sum_K \|\phi_K\|_{H^2(K)}^2}=\frac{f(\sum_K \phi_K)^2}{\|\sum_K \phi_K\|_{H^2(K)}^2} \lesssim \|f\|_{H^{-2}(\Omega)}^2$.
By combining \eqref{eq:18} and \eqref{eq:20}, the proof is completed.
\end{proof}

\begin{remark}
For the analysis of the use of nonconforming test spaces, in Theorems~\ref{thm:lsnonconf} and \ref{thm:theorempje} we applied the `\emph{medius analysis}' introduced by T.~Gudi in \cite{77.4} (for Galerkin discretizations of elliptic problems). 

Assuming that $\triangle u \in L_2(\Omega)$, alternatively in \eqref{eq:24} for $z \in X^\delta$ and $\tilde{v} \in Y^\delta$
one may expand $
(B^\delta z -\ell^\delta)(\tilde{v})$ in \eqref{eq:24} as
$$
B^\delta (z-u)(\tilde{v})-(\ell^\delta+\triangle u)(\tilde{v})+ \sum_{K \in \tria^\delta} \int_K \tilde{v} \triangle u-u \triangle \tilde{v}\,dx.
$$
The first two terms in modulus are bounded by a multiple of $(\|u-z\|_{L_2(\Omega)}+\|\ell^\delta+\triangle u\|_{(V^\delta)'})\|\tilde{v}\|_{V^\delta}$.
For $Y^\delta$ being the Morley finite element space w.r.t.~$\tria^\delta$, and assuming that $u \in H^1(\Omega)$, 
the remaining consistency error known from \emph{Strang's second lemma} can be bounded by 
\begin{align*}
 |\sum_{K \in \tria^\delta} \int_K\tilde{v}\triangle u -u \triangle \tilde{v}\,dx|&=\big|\sum_{e \in \cE^\delta} \int_e \partial_{n_e}u \llbracket \tilde{v}\rrbracket-u \llbracket \partial_{n_e}\tilde{v}\rrbracket\,ds\big|\\
&\lesssim \sqrt{\sum_{K\in \tria^\delta} h_K^2|u|_{H^1(K)}^2+h_K^4\|\triangle u\|_{L_2(K)}^2}\, \|\tilde{v}\|_{V^\delta}
\end{align*}
(e.g.~consult \cite[pp.~1383--1384]{20.12}). Taking $\ell^\delta:=\ell$, this leads to the bound
\be \label{eq:26}
\begin{split}
\|A(u-&u_\eps^\delta)\|_{H^{-2}(\Omega)\times L_2(\omega)}+\eps\|u-u_\eps^\delta\|_{L_2(\Omega)}  \lesssim\\
&\|\ell+\triangle u\|_{(V^\delta)'}+\|Cu -q\|_{L_2(\omega)}+\eps\|u\|_{L_2(\Omega)}+\min_{z \in X^\delta} \| u-z\|_{L_2(\Omega)}\\
&+\sqrt{\sum_{K\in \tria^\delta} h_K^2|u|_{H^1(K)}^2+h_K^4\|\triangle u\|_{L_2(K)}^2}.
\end{split}
\ee

Unlike Theorem~\ref{thm:theorempje}, this bound requires a priori smoothness conditions on $u$, whereas moreover the oscillation term in Theorem~\ref{thm:theorempje} is of higher order than  the square root term in \eqref{eq:26}.
\end{remark}

\begin{remark}
We expect that in any case the result of Theorem~\ref{thm:theorempje} extends to polytopes $\Omega \subset \R^n$ for $n \geq 3$.
The definition of the Morley finite element space $Y^\delta$ has been extended to $n \geq 3$ in \cite{310.65}.
The construction of the Fortin projector $Q^\delta$ from \eqref{eq:weerFortin} also extends to $n \geq 3$.
Some companion operator $E^\delta \in \cL(Y^\delta,H^2_0(\Omega))$ that satisfies \eqref{eq:17}, and so \eqref{eq:12}, can also be constructed for $n \geq 3$.
For the result of Theorem~\ref{thm:theorempje} the additional property \eqref{eq:28} was convenient but not necessary.
Without \eqref{eq:28}, the proof of Theorem~\ref{thm:theorempje} is completed by 
$$
|(B^\delta z)(v-E^\delta v)| \lesssim
\sqrt{\sum_{e \in \cE_\circ^\delta} h_e \|\llbracket z \rrbracket_e\|_{L_2(e)}^2} \|v\|_{Y^\delta},\quad(v \in Y^\delta,\,z \in X^\delta),
$$
and, using the bubble function technique, by
$$
\sum_{e \in \cE_\circ^\delta} h_e \|\llbracket z \rrbracket_e\|_{L_2(e)}^2
\lesssim \|u-z\|_{L_2(\Omega)}^2+\|\ell-Bu\|_{H^{-2}(\Omega)}^2+\osc_\delta(\ell)^2. \qedhere
$$
\end{remark}

\section{Least squares approximations for the Cauchy problem} \label{sec:7}
In Sections~\ref{sec:3}--\ref{sec:6}, a regularized least squares method was presented for solving the UC problem in ultra-weak variational form $Au=(Bu,Cu)=(\ell,q)$ (see Theorem~\ref{RSprop:2}). Here we briefly discuss modifications needed for the application of this least squares method for solving the Cauchy problem in the ultra-weak variational form $Bu=f$, where $(Bz)(v):=-\int_\Omega z \triangle v\,dx$, and $f(v):=\ell(v)+\int_\Sigma \psi v-g \partial_n v\,ds$ ($z \in L_2(\Omega)$, $v \in H^2_{0,\Sigma^c}(\Omega)$) (see Theorem~\ref{thm:condstabCauchy}) such that the key results Theorem~\ref{thm:ls}--Corollary~\ref{RScorol:1} and Theorem~\ref{thm:lsnonconf}--Corollary~\ref{corol:2}  remain valid.
Since the case $|\Sigma^c|=0$ is not relevant, in the following we assume that $|\Sigma^c|>0$.
Further, we assume that $\overline{\Sigma}$, and so $\overline{\Sigma^c}$, are unions of edges from the triangulations $\tria^\delta$.

The first adaptation needed is to omit from the saddle-point formulations \eqref{eq:saddle1} (conforming test spaces) or  \eqref{eq:saddle2} (nonconforming test spaces) terms that contain the operator $C$. Furthermore the functional $\ell$ should be replaced by $f$.

Although the operator $B$ has the same definition, for the UC problem its acts on $v \in H^2_0(\Omega)$, and for the Cauchy problem on $v \in H^2_{0,\Sigma^c}(\Omega)$. On $H_0^2(\Omega)$ it holds that $\|\bigcdot\|_{H^2(\Omega)} \eqsim \|\triangle \bigcdot\|_{L_2(\Omega)}$, which is not true on $H^2_{0,\Sigma^c}(\Omega)$. What does hold on the latter space is that $\|\bigcdot\|_{H^2(\Omega)} \eqsim |\bigcdot|_{H^2(\Omega)}$.
So for the Cauchy problem, the scalar product $\langle \triangle \bigcdot, \triangle \bigcdot\rangle_{L_2(\Omega)}$ in \eqref{eq:saddle1} has to be replaced by $\sum_{|\alpha|=2}\langle \partial^\alpha \bigcdot, \partial^\alpha\bigcdot\rangle_{L_2(\Omega)}$.

For the UC problem, as test spaces we considered HTC and Morley finite element spaces where DoF associated to points on $\partial\Omega$ were set to zero. For the Cauchy problem only DoF associated to points on $\overline{\Sigma^c}$ has to be set to zero. With this adaptation the constructions of valid Fortin \eqref{fortin}, or generalized Fortin operators \eqref{eq:13} directly extend to the Cauchy problem.
Inspection of the proof from \cite{310.65} reveals that, thanks to $|\Sigma^c|>0$, also in setting of the Cauchy problem $\sum_{K \in \tria^\delta} |\bigcdot|_{H^2(K)}^2 \eqsim \sum_{K \in \tria^\delta} \|\bigcdot\|_{H^2(K)}^2$ is valid on the Morley finite element space, so that the scalar product in \eqref{eq:saddle2} does not have to be adapted.
The construction of the companion $E^\delta$ with the properties \eqref{eq:17} and  \eqref{eq:28} directly extends to the setting of the Cauchy problem.

For $Y^\delta$ being the Morley finite element space, what remains is a definition of $f^\delta \in (Y^\delta)'$ and that of $\osc_\delta(f)$ that verifies \eqref{eq:22}. As in Theorem~\ref{thm:1} a first option is to take $f^\delta(\bigcdot)=f(E^\delta \bigcdot)$ in which case \eqref{eq:22} is valid with $\osc_\delta(\bigcdot)=0$. As noticed before, a disadvantage of this option is that the `complicated' HTC space enters the evaluation of the forcing term.

Similarly to Theorem~\ref{thm:theorempje}, a second option is to restrict to $\ell \in L_2(\Omega)$, and, say, to $\psi, g \in L_2(\Sigma)$, and to extend 
that theorem to this setting. For non-zero $g$, however, we did not manage to establish a bound of type \eqref{eq:22} for $\osc_\delta(f)$ being a true oscillation-term, i.e., an expression that vanishes for $f$, $\psi$ and $g$ from spaces of piecewise polynomials w.r.t.~$\tria^\delta$ or $\{e\colon e \in \cE^\delta,\,e \subset \Sigma\}$, respectively.

Therefore, for $\ell \in L_2(\Omega)$, as a third option we propose to take
\be \label{eq:third}
f^\delta(v)=\int_\Omega \ell v \,dx+\int_\Sigma \psi E^\delta v-g \partial_n E^\delta v\,ds.
\ee
Then $f(E^\delta v)-f^\delta(v)=\int_\Omega \ell(E^\delta v-v)\,dx$, and with $\osc_\delta(\ell)$ from Theorem~\ref{thm:theorempje}, by following its proof we find
\begin{align*}
\sup_{0 \neq v \in Y^\delta}\!\frac{|(B^\delta z)(v\!-\!E^\delta v)\!+\!f(E^\delta v)\!-\!f^\delta(v)|}{\|v\|_{Y^\delta}} & \lesssim
\|\ell\!-\!Bu\|_{H^{-2}(\Omega)}\!+\!\| u\!-\!z\|_{L_2(\Omega)}\!+\!\osc_\delta(\ell)\\
&\!\!\!\!\!\!\!\!\leq \|f\!-\!Bu\|_{(H_{0,\Sigma^c}(\Omega))'}\!+\!\| u\!-\!z\|_{L_2(\Omega)}\!+\!\osc_\delta(\ell).
\end{align*}

To evaluate \eqref{eq:third}, for $e \in \{\widetilde{e}\colon \widetilde{e}\in \cE^\delta,\,\widetilde{e} \subset \Sigma\}$, and $v$ from the Morley space $Y^\delta$, we need $(E^\delta v)|_e \in P_3(e)$ and $(\partial_{n_e} E^\delta v)|_e \in P_2(e)$.
The function $(E^\delta v)|_e$ is determined by its values at both $\nu \in \cN^\delta$ on $e$, which are the values of $v$ at these nodes, together with the values of its first order derivatives at these $\nu$, which for each of these $\nu$ is given by the first order derivative at $\nu$ of $(v|_{K_\nu})|_e$ for some $\tria^\delta \ni K_\nu \ni \nu$.
The function $(\partial_{n_e} E^\delta v)|_e$ is determined by its values at both $\nu \in \cN^\delta$ on $e$, which for each of these $\nu$
 is given by the value of $(\partial_{n_e} v|_{K_\nu})(\nu)$, and the value of $\int_e \partial_{n_e} E^\delta v \,ds$ which is equal to $\int_e \partial_{n_e} v \,ds=h_e (\partial_{n_e} v)(m_e)$.
 
 We conclude that this third option for $f^\delta$ does not require the construction of the HTC basis functions, whereas, for $\ell \in L_2(\Omega)$, it results in an oscillation term which is of higher order than the error of best approximation $\inf_{z \in X^\delta} \|u-z\|_{L_2(\Omega)}$.

\section{Numerical experiments} \label{sec:8}

\subsection{Unique continuation}
On the square domain $\Omega = (-1,1)^2$, and subdomain $\omega = (-\tfrac12,\tfrac12)^2$, for data $\ell$ and $q$ we consider the unique continuation problem of finding $u\in L_2(\Omega)$ that satisfies, in distributional sense, 

$$
 \left\{
\begin{array}{r@{}c@{}ll}
-\Delta u&\,\,=\,\,& \ell &\text{ on } \Omega,\\
u &\,\,=\,\,& q &\text{ on } \omega.
\end{array}
\right.
$$

Let $\bbT$ denote the collection of all conforming triangulations that can be created by newest vertex bisections starting from the initial triangulation that consists of 4 triangles created by cutting $\Omega$ along its diagonals. The interior vertex of the initial triangulation of both squares are labelled as the `newest vertex' of all four triangles in both squares.

For $\tria^\delta \in \bbT$, we choose $X^\delta$ as the space of \emph{piecewise constants} w.r.t.~$\tria^\delta$, and $Y^\delta$ as the \emph{Morley} finite element space w.r.t.~$\tria^\delta$ with all DoFs associated to boundary points set to zero. 
In the experiments for this unique continuation problem, we consider uniform triangulations $\tria^\delta$ only.

Following Section~\ref{sec:5}--\ref{sec:6}, the numerical approximation $u^\delta_\eps$ of $u$ is computed as the first component of the pair $(u^\delta_\eps, v^\delta_\eps)\in X^\delta\times Y^\delta$ that satisfies
\begin{align*}
&\sum_{K \in \tria^\delta}\int_K \big\{\sum_{|\alpha|=2} \partial^\alpha v_\eps^\delta \partial^\alpha \tilde{v}\big\} - u_\eps^\delta \triangle \tilde{v} - \tilde{z} \triangle v^\delta_\eps \, dx- \eps^2\langle u^\delta_\eps, \tilde{z}\rangle_{L_2(\Omega)} - \langle u^\delta_\eps, \tilde{z}\rangle_{L_2(\omega)} \\
&\hspace*{10em} = \ell^\delta(\tilde{v}) - \langle q, \tilde{z}\rangle_{L_2(\omega)}
 ,\qquad ((\tilde{z}, \tilde{v} ) \in X^\delta\times Y^\delta).
\end{align*}

We will consider the prescribed solutions $u(x,y) = e^x\cos(y)$ and $u(x,y) = \tfrac{1}{4 \pi} \log(x^2 + y^2)$, and corresponding \emph{exact} data
$(\ell,q)=(-\triangle u,u|_\omega)$. Because of the latter, and the fact that the best approximation error in $L_2(\Omega)$ behaves at best as $\sim \rm{DoFs}^{-\frac12}$, we have set $\eps:=\rm{DoFs}^{-\frac12}$. Setting $\eps=0$, however, gave similar results.

In the first, smooth case, we have $\ell=0$, and we took $\ell^\delta=0$.
In the second case, we have $\ell = \delta_0$, where $\delta_0\colon v \mapsto v(0)$ is the Dirac functional.
Because $\delta_0 \not\in (Y^\delta)'$, we set $\ell^\delta := \ell \circ E^\delta$, where $E^\delta$ is the companion operator defined in Section~\ref{sec:6}.

For different choices of $\omega \subseteq G\subseteq \Omega$, Figure \ref{figure:UC} shows the $L_2(G)$-error in the finite element approximations $u^\delta_\eps$.
For the smooth solution $u(x,y) = e^x\cos(y)$, the convergence rates vary between the optimal rate $\frac12$ for $G=\omega$, being in correspondence with Footnote~\ref{voetje}, and the rate $\frac{1}{10}$ for $G=\Omega$, actually a case which is not covered by the theory.

The results for $u(x,y) = \tfrac{1}{4 \pi} \log(x^2 + y^2)$ are similar, although the rate $\frac12$ for $G=\omega$ is not exactly attained, which can be understood by noticing that this $u$ is just not in $H^1(\Omega)$.

\begin{figure}[h]
\centering
\begin{subfigure}{0.5\textwidth}
\includegraphics[width = \textwidth]{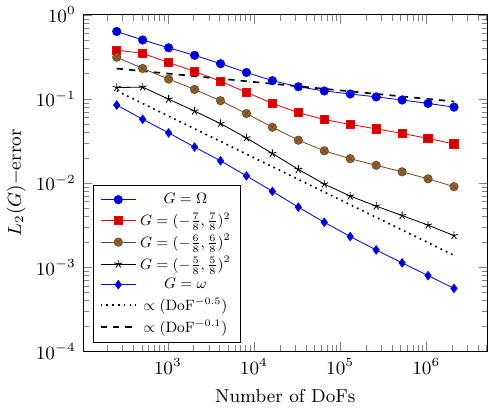}
\end{subfigure}%
\begin{subfigure}{0.5\textwidth}
\includegraphics[width = \textwidth]{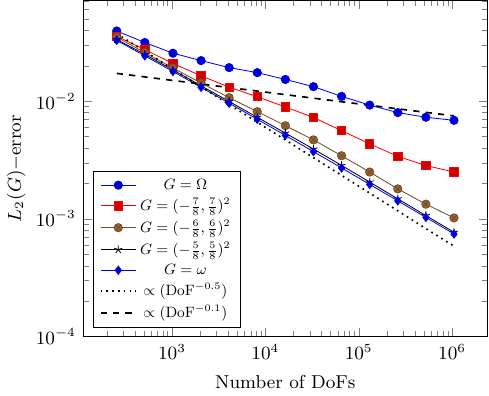}
\end{subfigure}%
\caption{DoFs in $X^\delta$ vs.~error in $u_\eps^\delta$ in $L_2(G)$-norm for different choices of $\omega \subseteq G\subseteq \Omega$. Left: error for $u(x,y) = e^x\cos(y)$.  Right: error for $u(x,y) = \tfrac{1}{4 \pi} \log(x^2 + y^2)$. In both cases $\eps = \mbox{DoFs}^{-\frac12}$.}
\label{figure:UC}
\end{figure}

\subsection{Cauchy problem}
On the rectangular domain $\Omega = (-1,1)\times (0,1)$, and Cauchy boundary $\Sigma=(-1,1)\times\{0\}$, for data $\ell$, $g$, and $\psi$ we consider the Cauchy problem of finding $u \in L_2(\Omega)$ that satisfies, in distributional sense, 
$$
 \left\{
\begin{array}{r@{}c@{}ll}
-\Delta u&\,\,=\,\,& \ell &\text{ on } \Omega,\\
u &\,\,=\,\,& g &\text{ on } \Sigma,\\
\nabla u \cdot \vec{n}&\,\,=\,\,& \psi &\text{ on } \Sigma.
\end{array}
\right.
$$
We prescribe the harmonic solution $u(x,y) := \tilde u(x,y) - \tilde u(-x,y)$ (see Figure~\ref{fig:u}) where $\tilde u(r,\theta) := r^{-\frac12} \sin (-\frac{\theta}{2})$ in polar coordinates, and determine the exact data correspondingly.
One may verify that $u \in H^{\frac12-\eps}(\Omega)$ for all $\eps>0$, whereas $u \not\in H^{\frac12}(\Omega)$.

\begin{figure}[h]
    \centering
    \includegraphics[width = 0.5\textwidth]{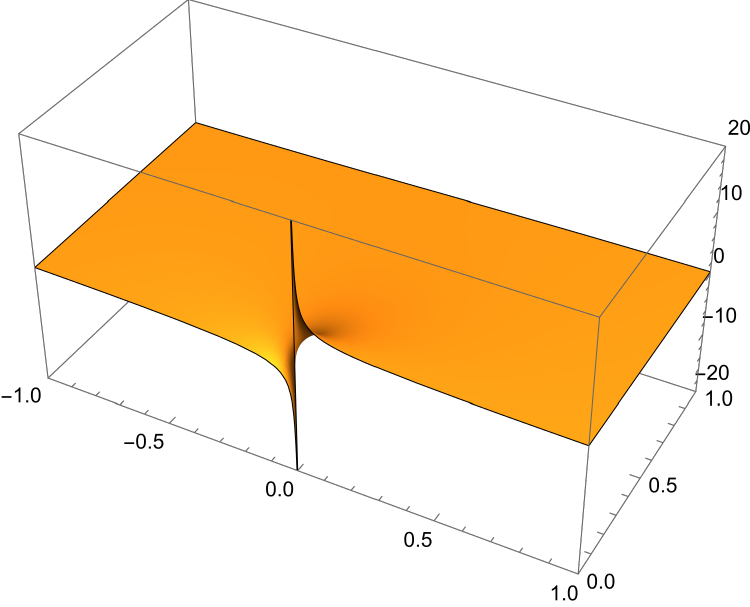} \hspace*{-0.8em}
        \raisebox{4ex}{\includegraphics[width = 0.5\textwidth]{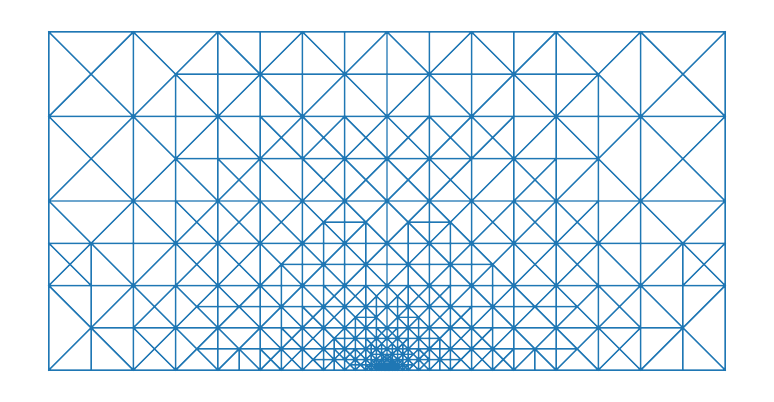}}
    \caption{Left: Plot of $(x,y)\mapsto \min(\max(u(x,y),20),-20)$. Right: Example of a mesh generated by our adaptive routine.}
    \label{fig:u}
\end{figure}

Let $\bbT$ denote the collection of all conforming triangulations that can be created by newest vertex bisections starting from the initial triangulation that consists of 8 triangles created by first cutting $\Omega$ along the y-axis into two equal squares, and then cutting these squares along their diagonals. The interior vertex of the initial triangulation of both squares are labelled as the `newest vertex' of all four triangles in both squares.

For any $\tria^\delta \in \bbT$, we choose $X^\delta$ as the space of \emph{piecewise constants} w.r.t.~$\tria^\delta$, and $Y^\delta$ as the \emph{Morley} finite element space w.r.t.~$\tria^\delta$ with all DoFs associated to points on $\overline{\Sigma^c}$ set to zero.

Following Section ~\ref{sec:7} the numerical approximation $u^\delta_\eps$ of $u$ is computed as the first component of the pair $(u^\delta_\eps, v^\delta_\eps)\in X^\delta\times Y^\delta$ that satisfies

\begin{align*}
&\sum_{K \in \tria^\delta}
\int_K \big\{\sum_{|\alpha|=2} \partial^\alpha v_\eps^\delta \partial^\alpha \tilde{v}\big\} - u_\eps^\delta \triangle \tilde{v} - \tilde{z} \triangle v^\delta_\eps \, dx
- \eps^2\langle u^\delta_\eps, \tilde{z}\rangle_{L_2(\Omega)} - \langle u^\delta_\eps, \tilde{z}\rangle_{L_2(\omega)} \\
&\hspace*{10em} = \int_\Sigma \psi E^\delta \tilde{v}-g \partial_n E^\delta \tilde{v}\,ds
 ,\qquad ((\tilde{z}, \tilde{v} ) \in X^\delta\times Y^\delta).
\end{align*}

We will measure the $L_2(G)$-error for $G = (-\frac12,\frac12)\times(0,\frac12)$. Notice that $G$ 'touches' the singularity of $u$ in $(0,0)$. 

Because of the latter, and since only $u \in H^{\frac12-\eps}(\Omega)$ for $\eps>0$, even if $u_\eps^\delta$ would be a quasi-best $L_2(G)$-approximation to $u$ then for uniform meshes the convergence rate could not exceed $\frac14$.
We therefore investigate an adaptive refinement strategy. Following the idea of the Zienkiewicz \& Zhu estimator (\cite{320.65}), we define a continuous piecewise linear $\widehat{u}^\delta$ w.r.t.~$\tria^\delta$ by 
$$
\widehat{u}^\delta(\nu) = \frac{\sum_{\{K\in \mathcal{T}^\delta \colon K \ni \nu\} }|K| u^\delta_\eps|_K}{\{\sum_{K\in \tria^\delta\colon K\ni \nu\}}|K|},
\qquad (\nu \in \{\text{vertices of }\tria^\delta\}),
$$
and then approximate $\| u - u_\eps^\delta\|^2_{L_2(\Omega)}$ by $\sum_{K\in \tria^\delta}\eta_K$, where $\eta_K := \|\widehat{u}^\delta - u_\eps^\delta\|^2_{L_2(K)}$. Using these local error indicators, we apply a bulk chasing strategy with parameter $\theta=0.6$.

\subsubsection{Exact data}
Besides for our singular solution $u$ depicted in Figure~\ref{fig:u}, first we computed our approximation $u_\eps^\delta$ for the smooth, harmonic solution 
$u(x,y) := \sqrt{\tfrac{2}{\pi}}\sin(x)\sinh(y)$ using exact data $g$ and $\psi$.
The results shown in the left picture of Figure~\ref{figure:adaptive} for uniform meshes and $\eps = \mbox{DoFs}^{-\frac12}$ show the optimal convergence rate $\frac12$ despite the ill-posedness of the Cauchy problem. 

\begin{figure}
\centering
\begin{subfigure}{0.5\textwidth}
\includegraphics[width = \textwidth]{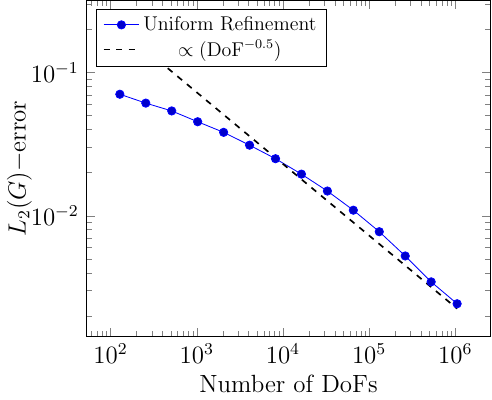}
\end{subfigure}%
\begin{subfigure}{0.5\textwidth}
\includegraphics[width = \textwidth]{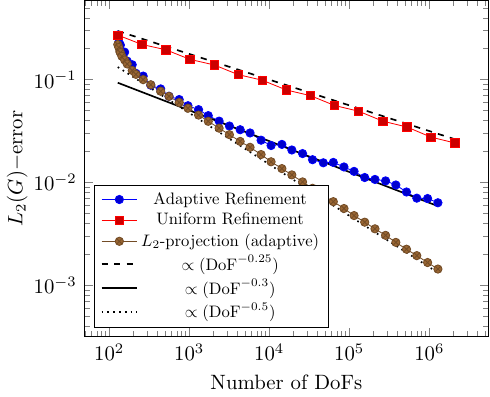}
\end{subfigure}%

\caption{DoFs in $X^\delta$ vs.~error in $L_2(G)$, exact data. Left: error for smooth solution with $\eps = \mbox{DoFs}^{-\frac12}$.
Right: error for singular solution $u$, as well as $\|Q^\delta u - u\|_{L_2(\Omega)}$ where $Q^\delta$ is the $L_2$-projection onto $X^\delta$ corresponding to the adaptive refined $\tria^\delta$. Here $\eps = \mbox{DoFs}^{-\frac12}$ or $\eps=0$ in the adaptive or uniform refinement case. }
\label{figure:adaptive}
\end{figure}

For the singular solution $u$ and exact data $g$ and $\psi$ the results for uniform meshes and $\eps=0$, and adaptive meshes and $\eps = \mbox{DoFs}^{-\frac12}$ are given in the right picture of Figure~\ref{figure:adaptive}.
These results show that adaptive refinement gives a quantitative improvement, although the asymptotic rate improves only slightly from $0.25$ (being optimal for uniform refinements) to $0.3$.
To verify whether our adaptive strategy produces quasi-optimal meshes we also computed the $L_2(\Omega)$-orthogonal projection of the exact solution onto the corresponding piecewise constant finite element space, and indeed the resulting errors decay with the optimal rate $\frac12$.

For the numerical experiments presented in the next two subsubsections,  the singular solution $u$  depicted in Figure~\ref{fig:u} is considered, and starting from the initial mesh all meshes $\tria^\delta$ have been created  using the adaptive strategy described above.

\subsubsection{Randomly perturbed data}
We demonstrate the effect of adding random perturbations to the Dirichlet datum.
Noticing that $\|v\mapsto\int_\Sigma g \partial_n v\,ds\|_{H^2_{0,\Sigma^c}(\Omega)'} \eqsim \|g\|_{H^{-\frac12}(\Sigma)}$, 
given $\tria^\delta$ we chose a random piecewise constant $g^\delta$ w.r.t.~$\cE^\delta \cap \overline{\Sigma}$ with $\mathcal{E}_{\rm data} \eqsim \|g^\delta\|_{H^{-\frac12}(\Sigma)} \approx 0.05$. 
We achieved this by normalizing a random function in $\cE^\delta \cap \overline{\Sigma}$, taking values in $[0,1]$, in a discrete $H^{-\frac12}(\Sigma)$-norm that on  piecewise constant functions is uniformly equivalent to the true $H^{-\frac12}(\Sigma)$-norm, and then multiplied the result with $0.05$. We used the discrete $H^{-\frac12}(\Sigma)$-norm constructed in \cite[p211]{75.258} using results from \cite{13.6}. 

Replacing the exact Dirichlet datum $g$ by $g+g^\delta$, in Figure \ref{figure:random} we compare the regularization strategies $\eps = \mathcal{E}_{\rm data} + \mbox{DoFs}^{-\frac12}$ and $\eps = \mbox{DoFs}^{-\frac12}$. The results indicate that such random perturbations are rather harmless, and therefore do not require additional regularization.

\begin{figure}
\centering
\begin{subfigure}{0.5\textwidth}
\includegraphics[width = \textwidth]{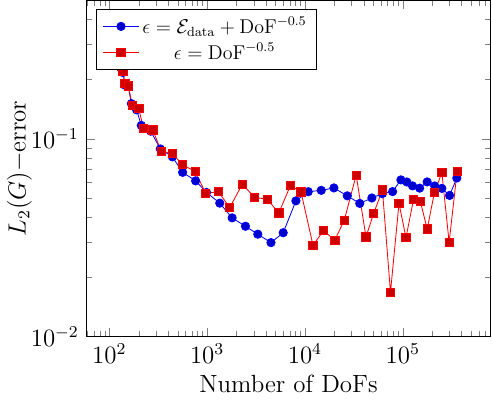}
\caption{}
\label{figure:random_fig}
\end{subfigure}%
\caption{DoFs in $X^\delta$ vs.~error in $L_2(G)$ in the case of random piecewise contant perturbations w.r.t.~current (boundary) mesh of the Dirichlet datum with $H^{-\frac12}(\Sigma)$-norm $\approx 0.05$.}
\label{figure:random}
\end{figure}

\subsubsection{'Difficult' perturbations}
We will construct `tough' perturbations in the following way.
One can show that for $m\in \mathbb{N}$, the solution $u = u^{(m)}$ of the Cauchy problem with data $(\ell,g,\psi)= (0,g^{(m)}, 0)$ where $g^{(m)}(x):=\sqrt{\tfrac{m \pi}{2}}\sin( \tfrac{m\pi}{2}(x + 1))$ is given by $u^{(m)}(x,y) = \sqrt{\tfrac{m \pi}{2}}\sin( \tfrac{m\pi}{2}(x + 1))\cosh(\tfrac{m\pi}{2}y)$. It holds that $\|g^{(m)}\|_{H^{-\frac12}(\Sigma)} = 1$, whereas $\|u^{(m)}\|_{L_2(\Omega)}\sim \tfrac{1}{2\sqrt{2}}e^{\tfrac{m\pi}{2}}$ ($m \rightarrow \infty$) illustrating the ill-posedness of the Cauchy problem.

Replacing the exact Dirichlet datum $g$ by $g+0.05 g^{(m)}$, we again compare the regularization strategies of choosing  $\eps = \mathcal{E}_{\rm data} + \mbox{DoFs}^{-\frac12}$ and $\eps = \mbox{DoFs}^{-\frac12}$. For intermediate values of $m$, Figure \ref{figure:difficultperturbation} shows that additional regularization because of the data error is needed. An explanation why for larger $m$ such regularization is not helpful is that the resulting errors are not yet resolved on the meshes that were employed.

\begin{figure}
\centering
\begin{subfigure}{0.5\textwidth}
\includegraphics[width = \textwidth]{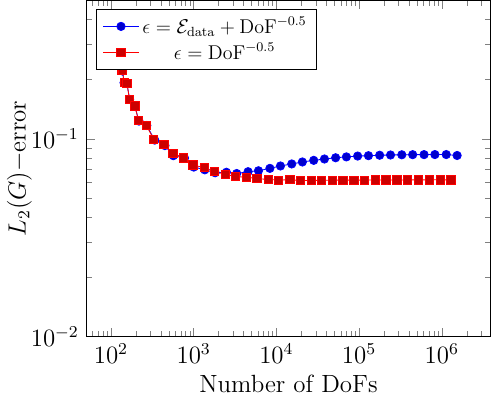}
\caption{$m=1$}
\label{figure:m1}
\end{subfigure}%
\begin{subfigure}{0.5\textwidth}
\includegraphics[width = \textwidth]{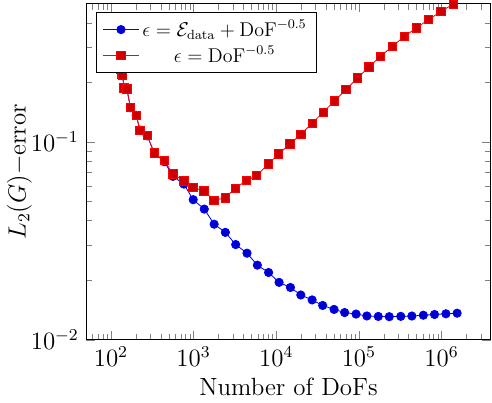}
\caption{$m=4$}
\label{figure:m4}
\end{subfigure}
\begin{subfigure}{0.5\textwidth}
\includegraphics[width = \textwidth]{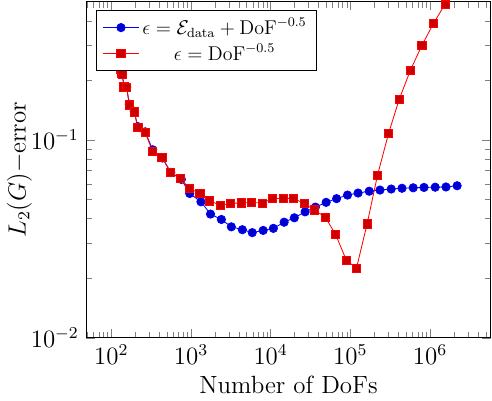}
\caption{$m=6$}
\label{figure:m6}
\end{subfigure}%
\begin{subfigure}{0.5\textwidth}
\includegraphics[width = \textwidth]{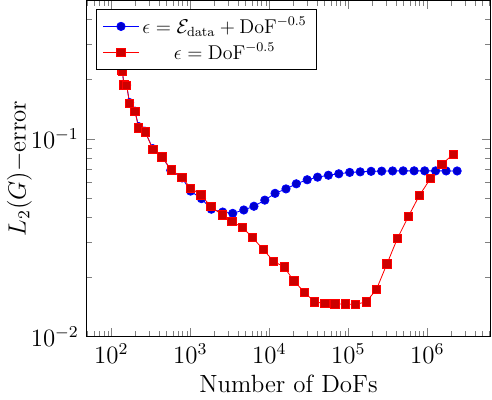}
\caption{$m=16$}
\label{figure:m16}
\end{subfigure}%

\caption{DoFs in $X^\delta$ vs.~error in $L_2(G)$ for different choices of $\eps$ and perturbation of the Dirichlet data with $0.05*g_m$.}
\label{figure:difficultperturbation}
\end{figure}

\section{Conclusion} \label{sec:9} \enlargethispage*{2em}
We have established conditional stability estimates for the Unique Continuation and Cauchy problems in ultra-weak variational formulations, which estimates are the basis of regularized least squares formulations.
Replacing dual norms in the least squares functional by computable discretized dual norms leads to a mixed formulation.
For uniformly stable pairs of trial- and test spaces, the $L_2$-error on a subdomain can be bounded by the best possible fractional power of the sum of the data error and the error of best approximation. In comparison to standard variational formulations they show a qualitatively best possible achievable error decay at lower computational cost. The use of $C^1$ finite element test functions can be avoided by the application of nonconforming test functions. \pagebreak

\end{document}